\def\qed{\hfill\Box\smallskip}
\newtheorem{theorem}{Theorem}[section]
\newtheorem{lemma}[theorem]{Lemma}
\newtheorem{corollary}[theorem]{Corollary}
\newtheorem{proposition}[theorem]{Proposition}
\newtheorem{remark}[theorem]{Remark}
\newtheorem{question}{Question}
\def\({\left(}
\def\){\right)}
\DeclareMathOperator{\arccosh}{arccosh}
\DeclareMathOperator{\sign}{sign}
\DeclareMathOperator{\const}{const}
\DeclareMathOperator{\End}{End}
\begin{document}


\title[Asymptotics of tensor representations of the symmetric group]{Asymptotics of the maximal and the typical dimensions of isotypic components of tensor representations of the symmetric group}

\dedicatory{Dedicated to Professor Antonio Mach\`{i} on his 70th birthday}

\author{Sevak Mkrtchyan}

\email{sevak.mkrtchyan@rice.edu \newline\indent\url{http://math.rice.edu/~sm29}}
\address{Department of Mathematics MS 136, Rice University, Houston, TX 77005, USA }

\begin{abstract}

Vershik and Kerov gave asymptotical bounds for the maximal and the typical dimensions of irreducible representations of symmetric groups $S_n$. It was conjectured by G. Olshanski that the maximal and the typical dimensions of the isotypic components of tensor representations of the symmetric group admit similar asymptotical bounds. The main result of this article is the proof of this conjecture. Consider the natural representation of $S_n$ on $(\mathbb{C}^N)^{\otimes n}$. Its isotypic components are parametrized by Young diagrams with $n$ cells and at most $N$ rows. P. Biane found the limit shape of Young diagrams when $n\rightarrow\infty,\ \sqrt{n}/N\rightarrow c$. By showing that this limit shape is the unique solution to a variational problem, it is proven here, that after scaling, the maximal and the typical dimensions of isotypic components lie between positive constants. A new proof of Biane's limit-shape theorem is obtained.
\end{abstract}



\maketitle

\section{Introduction}

For $n\in\mathbb{N}$, let $S_n$ be the symmetric group on $n$ letters. The complex finite dimensional irreducible representations of $S_n$ are parametrized by the set of partitions of $n$, or equivalently by the set $\mathbb{Y}^n$ of Young diagrams with $n$ cells. Since each irreducible representation of $S_n$ appears in the left regular representation $\mathbb{C} S_n$ of $S_n$ with multiplicity equal to its dimension \cite{FultonHarris}, it follows that 
$$
n!=\sum_{\lambda\in\mathbb{Y}^n}\(\dim \lambda\)^2,
$$
where $\dim \lambda$ is the dimension of the irreducible representation $V_\lambda$ of $S_n$ corresponding to the Young diagram $\lambda$. Thus, the measure defined by 
$$
\mathbb{P}l^n(\lambda)=\frac{(\dim\lambda)^2}{n!}
$$
is a probability measure on $\mathbb{Y}^n$. $\mathbb{P}l^n$ is called the Plancherel measure. 

If $V$ is an irreducible subrepresentation of a representation $U$ of a finite group, the isotypic component of $U$ corresponding to $V$ is defined to be the sum of all subrepresentations of $U$ which are isomorphic to $V$. Isotypic components of $U$ are subrepresentations of $U$. $U$ decomposes uniquely into a direct sum of isotypic components. Note that following a widely used convention, whenever there is no ambiguity in the action, we will identify a representation with the underlying space. 

It is easy to see that for $\lambda\in\mathbb{Y}^n$, $\mathbb{P}l^n(\lambda)$ is the relative dimension of the isotypic component of the regular representation corresponding to $\lambda$. 

Two natural questions can be posed about the asymptotics of the dimensions of irreducible representations of the symmetric group:
\begin{question}
What is the asymptotic behavior of the maximal dimension of irreducible representations of $S_n$ in the limit $n\rightarrow\infty$?
\end{question}
\begin{question}
What is the asymptotic behavior of the dimension of a typical irreducible representation $V_\lambda$ of $S_n$ in the limit $n\rightarrow\infty$ if $\lambda$ is sampled randomly according to the Plancherel measure?
\end{question}

In 1985 Vershik and Kerov \cite{VK85} gave answers to both questions by obtaining two-sided logarithmically order-sharp asymptotic bounds. Vershik and Kerov conjectured that in the case of the typical dimension a stronger result holds: after appropriate scaling the dimensions of typical irreducible representations converge to a constant in measure. The conjecture has recently been proven by A. Bufetov \cite{Bu}.

\subsection{Main results}
The main results of this article are two-sided logarithmically order-sharp asymptotic bounds for the dimensions of isotypic components of tensor representations of $S_n$. Let $N,n$ be two positive integers and consider the tensor product space $(\mathbb{C}^N)^{\otimes n}$. The tensor representation of order $N$ of the symmetric group on $n$ letters is the natural action of $S_n$ on this space by permuting the factors in the tensor product. In coordinates, if $(v_1,v_2,\ldots,v_n)\in(\mathbb{C}^N)^{\otimes n}$ and $\pi\in S_n$, then
$$
\pi\cdot(v_1,v_2,\ldots,v_n)=(v_{\pi^{-1}(1)},v_{\pi^{-1}(2)},\ldots,v_{\pi^{-1}(n)}).
$$
It follows from Schur--Weyl duality (see Section \ref{sec:SchurWeyl}) that the irreducible representations which are subrepresentations of the representation $(\mathbb{C}^N)^{\otimes n}$ are exactly the ones which correspond to Young diagrams with $n$ cells and at most $N$ rows. Let $\mathbb{Y}_N^n$ denote the set of such Young diagrams, and given $\lambda\in\mathbb{Y}_N^n$ let $E_\lambda$ denote the isotypic component of $(\mathbb{C}^N)^{\otimes n}$ corresponding to $V_\lambda$. Looking at dimensions we have
\begin{equation}
N^n=\sum_{\lambda\in\mathbb{Y}_N^n}\dim E_\lambda.
\end{equation}
The relative dimensions of the isotypic components give a probability measure on $\mathbb{Y}_N^n$:
\begin{equation*}
\mathbb{P}_N^n(\lambda)=\frac{\dim(E_\lambda)}{N^n}.
\end{equation*}
The main results of this article are the following two theorems, conjectured by G. Olshanski, on the asymptotics of the dimensions of the isotypic components of tensor representations of the symmetric group in the limit $n,N\rightarrow\infty$, $\sqrt{n}/{N}\rightarrow c$.
\begin{theorem}
\label{thm:max}
For any $c>0$ there exist positive numbers $\alpha_c$ and $\beta$ such that for large enough $n\in\mathbb{N}$ and for any $N\in\mathbb{N}$, if $c>{\sqrt{n}}/{N}$, then
\begin{equation}
\alpha_c<-\frac{1}{\sqrt{n}}\ln\frac{\max_{\lambda\in\mathbb{Y}_N^n}\{\dim E_\lambda\}}{N^n}<\beta.
\end{equation}
\end{theorem}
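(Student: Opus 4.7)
The bound splits into an easy direction (the upper bound involving $\beta$) and a substantial direction (the lower bound involving $\alpha_c$). The easy direction I would dispose of first by pigeonhole: since $\sum_{\lambda\in\mathbb{Y}_N^n}\dim E_\lambda = N^n$, the maximum relative dimension satisfies $\max_{\lambda}\dim E_\lambda/N^n \geq |\mathbb{Y}_N^n|^{-1}\geq p(n)^{-1}$, where $p(n)$ is the unrestricted partition function. The Hardy--Ramanujan asymptotic $\ln p(n) = O(\sqrt{n})$ immediately yields a constant $\beta$, independent of $c$.

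The remaining task is the uniform estimate $\dim E_\lambda/N^n \leq \exp(-\alpha_c\sqrt{n})$ for every $\lambda\in\mathbb{Y}_N^n$. My plan is to first derive an exact combinatorial formula for the measure: applying Schur--Weyl duality, the hook length formula for $\dim V_\lambda$, and the Weyl dimension formula $\dim W_\lambda^N = \prod_{(i,j)\in\lambda}(N+j-i)/h(i,j)$, one gets
$$
\mathbb{P}_N^n(\lambda) \;=\; \frac{n!}{N^n}\cdot\frac{\prod_{(i,j)\in\lambda}(N+j-i)}{\prod_{(i,j)\in\lambda} h(i,j)^2}.
$$
Rescaling $\lambda$ by $1/\sqrt{n}$, applying Stirling's formula to $n!$, and replacing the products over cells by Riemann sums for the associated hook-length and content integrals over the scaled profile $\tilde\lambda$, the quantity $-\tfrac{1}{\sqrt{n}}\ln \mathbb{P}_N^n(\lambda)$ should converge, with error uniform in $\lambda$, to an explicit functional $F_c(\tilde\lambda)$. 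This functional is the difference between a logarithmic double integral (coming from the hook product) and a content integral weighted by a factor involving $c$ (coming from the Weyl numerator), plus a Stirling constant.

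The core step is then variational: show that $\inf_{\tilde\lambda} F_c(\tilde\lambda) =: \alpha_c > 0$, where the infimum is over rescaled profiles of area $1$ contained in the horizontal strip of height $1/c$. Following the Vershik--Kerov template, I would argue that $F_c$ is strictly convex and weakly lower-semicontinuous on the natural space of profiles, deduce existence and uniqueness of a minimizer, identify it with Biane's limit shape via the Euler--Lagrange equation, and evaluate $F_c$ there (or bound it below by a reference calculation) to verify strict positivity. Uniqueness of the minimizer, combined with a standard concentration argument on $(\mathbb{Y}_N^n,\mathbb{P}_N^n)$, would then recover Biane's limit-shape theorem as promised in the abstract.

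The principal obstacles are twofold. First, passing from the combinatorial formula to the functional $F_c$ with error uniform over all $\lambda\in\mathbb{Y}_N^n$ is delicate because of the logarithmic singularities in the hook integral and the boundary behavior of very flat or very tall diagrams, which must be controlled by careful Euler--Maclaurin estimates. Second, and more serious, is the quantitative positivity of $\alpha_c$: one must rule out the possibility that the hook contribution and the content contribution nearly cancel, and must show the resulting gap stays bounded below uniformly as $\sqrt{n}/N$ approaches $c$ from below. This is the genuinely new difficulty compared with the Plancherel setting, because the height constraint $N$ interacts nontrivially with the content term and has no analogue in the classical Vershik--Kerov argument.
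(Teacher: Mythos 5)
Your treatment of the upper bound $\beta$ is correct and essentially matches the paper's argument (the paper phrases it through the measure of the set $\{\lambda:\mathbb{P}_N^n(\lambda)<e^{-\frac{2\pi}{\sqrt 6}\sqrt n}\}$, but the pigeonhole against the Hardy--Ramanujan bound for $p(n)$ is the same mechanism), and your exact product formula for $\mathbb{P}_N^n(\lambda)$ is the one the paper works with. The lower bound, however, contains a fatal gap. You propose to set $\alpha_c:=\inf_{\tilde\lambda}F_c(\tilde\lambda)$ over continuous admissible profiles and to verify positivity by evaluating $F_c$ at its minimizer, which you correctly expect to be Biane's curve $\Omega_c$. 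But the value of the limiting functional at $\Omega_c$ is exactly $0$: the hook and content contributions cancel precisely there, which is the content of Corollary \ref{cor:UniqueMinimizer} ($\theta(L)-\rho(L)\geq 0$ with equality if and only if $L=\Omega_c$, after the constant terms are accounted for). So the infimum you define equals $0$, and no amount of care with convexity, lower semicontinuity, or Euler--Lagrange identification can make it positive. The ``near-cancellation'' you flag as the principal difficulty is in fact an exact cancellation at the minimizer, so your proposed resolution cannot work.

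Relatedly, your normalization obscures where the constant actually comes from. By Proposition \ref{prop:measure}, $-\tfrac{1}{\sqrt n}\ln\mathbb{P}_N^n(\lambda)=\sqrt n\,(\theta(\lambda)-\rho(\lambda))+O(1)$: the profile functional enters multiplied by $\sqrt n$, so it is not the limit of $-\tfrac1{\sqrt n}\ln\mathbb{P}_N^n(\lambda)$, and a statement of the form ``$-\tfrac1{\sqrt n}\ln\mathbb{P}_N^n(\lambda)\to F_c(\tilde\lambda)$ uniformly'' cannot hold with $F_c$ a functional of the macroscopic profile alone. The positive constant is a discreteness effect: writing $\theta-\rho=\tfrac12\|f\|_{\frac12}^2+(\text{nonnegative terms})$ with $f=L_\lambda-\Omega_c$, one uses that $L_\lambda'$ takes only the values $\pm1$ on a mesh of size of order $n^{-1/2}$, while $|\Omega_c'|<1$ strictly on an interval of positive length; hence the diagonal contribution to $\|f\|_{\frac12}^2$ is bounded below by a constant times $n^{-1/2}\int(\sign(\tilde\Omega_c'(z))-\tilde\Omega_c'(z))^2\,dz$, and the prefactor $\sqrt n$ converts this $O(n^{-1/2})$ deficiency into the uniform constant $\alpha_c=\tfrac14\int_{-1}^1(\sign(z)-\tilde\Omega_c'(z))^2\,dz>0$. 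Without this step (or some substitute quantifying that no genuine Young-diagram profile can attain the continuum minimizer), your argument only shows that the leading term is nonnegative, which gives no positive lower bound on $-\tfrac1{\sqrt n}\ln\bigl(\max_\lambda\dim E_\lambda/N^n\bigr)$.
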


\begin{theorem}
\label{thm:meas}
For any $c>0$ there exist positive numbers $\alpha_c$ and $\beta$ such that if $$\lim\limits_{n\rightarrow \infty}\frac {\sqrt{n}}{N}=c,$$ then
\begin{equation}
\lim_{n\rightarrow \infty}\mathbb{P}_N^n\left\{\lambda:\alpha_c<-\frac{1}{\sqrt{n}}\ln\frac{\dim E_\lambda}{N^n}<\beta\right\}=1.
\end{equation}
\end{theorem}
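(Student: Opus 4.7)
The plan is to separate Theorem \ref{thm:meas} into its two inequalities and observe that the lower bound $F(\lambda):=-\frac{1}{\sqrt{n}}\ln(\dim E_\lambda/N^n)>\alpha_c$ is essentially deterministic given Theorem \ref{thm:max}, while the upper bound $F(\lambda)<\beta$ succumbs to a direct counting argument that exploits the particular form of the measure $\mathbb{P}_N^n$. The key observation is that $\mathbb{P}_N^n(\lambda)$ is itself $\dim E_\lambda/N^n$, so any event on which $\dim E_\lambda$ is small is automatically of small measure.

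For the lower bound I would invoke Theorem \ref{thm:max} with $c$ slightly enlarged (so that the strict inequality $\sqrt{n}/N<c$ eventually holds under the hypothesis $\sqrt{n}/N\to c$). It yields a positive constant $\alpha_c$ such that $\max_{\mu\in\mathbb{Y}_N^n}\dim E_\mu<N^n e^{-\alpha_c\sqrt{n}}$ for all large $n$. Since $\dim E_\lambda\le\max_\mu\dim E_\mu$ for every $\lambda\in\mathbb{Y}_N^n$, we get $F(\lambda)>\alpha_c$ deterministically, so $\mathbb{P}_N^n\{F(\lambda)>\alpha_c\}=1$ for all large $n$ and no probability estimate is required for this half.

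For the upper bound, set $A_\beta=\{\lambda\in\mathbb{Y}_N^n:\dim E_\lambda\le N^n e^{-\beta\sqrt{n}}\}$. Directly from the definition of $\mathbb{P}_N^n$,
\[
\mathbb{P}_N^n(A_\beta)=\sum_{\lambda\in A_\beta}\frac{\dim E_\lambda}{N^n}\le|A_\beta|\,e^{-\beta\sqrt{n}}\le p(n)\,e^{-\beta\sqrt{n}},
\]
where $p(n)$ is the number of partitions of $n$, using $|\mathbb{Y}_N^n|\le p(n)$. The Hardy--Ramanujan asymptotic $p(n)=\exp\bigl((\pi\sqrt{2/3}+o(1))\sqrt{n}\bigr)$ makes this bound tend to zero for any fixed $\beta>\pi\sqrt{2/3}$, and the resulting $\beta$ can be chosen independently of $c$, consistently with the statement.

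The main substantive obstacle therefore lies not in Theorem \ref{thm:meas} itself but in Theorem \ref{thm:max}, which supplies the deterministic upper bound on $\dim E_\lambda$. Once the maximum-dimension estimate is available, the typical-dimension statement reduces to the elementary Markov-type bound above; this mirrors the classical Vershik--Kerov reduction from maximal to typical dimensions in the Plancherel setting, where the asymmetry between the two sides of the inequality (deterministic on one side, a subexponential union bound on the other) is of exactly the same shape.
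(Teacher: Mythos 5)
Your proposal is correct and follows essentially the same route as the paper: the upper bound is proved there by exactly your Markov/counting argument, bounding $|\mathbb{Y}_N^n|$ by $p(n)$ and invoking Hardy--Ramanujan with $\beta=2\pi/\sqrt{6}$, and the lower bound in the paper is likewise a deterministic, pointwise estimate $-\frac{1}{\sqrt{n}}\ln\mathbb{P}_N^n(\lambda)\geq\alpha_c-\varepsilon$ valid for every $\lambda\in\mathbb{Y}_N^n$ (derived from Propositions \ref{prop:measure} and \ref{prop:integral}), which is precisely the content of the lower bound in Theorem \ref{thm:max} that you cite. Your appeal to Theorem \ref{thm:max} (with $c$ slightly enlarged to accommodate the strict inequality $\sqrt{n}/N<c$) is non-circular, since the paper establishes that lower bound independently of Theorem \ref{thm:meas}.
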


Note that the constants obtained in this article for both theorems are the same. In Section \ref{sec:MainProofs} we obtain exact formulas for the constants $\alpha_c$ and $\beta$. Also note that the bounds we obtain do not pretend to be sharp.

\subsection{Limit shape results}
\label{sec:LimitShapes}
The motivation behind considering the limit ${\sqrt{n}}/{N}\rightarrow c$ is a limit shape result by Vershik and Kerov \cite{VK77} and independently and simultaneously by Logan and Shepp \cite{LoganShepp} for random Young diagrams with respect to the Plancherel measure. The result has been generalized to the measures $\mathbb{P}_N^n$ by P. Biane \cite{Biane2001}. To state the results, we first need to introduce some notation.

Represent a Young diagram $\lambda$ with $n$ cells as a sequence $\lambda=(\lambda_1\geq\lambda_2\geq\ldots)$ where $\lambda_i\in\mathbb{Z}_{\geq 0}$ and $\sum\lambda_i=n$. Associate with $\lambda$ its diagram as shown in Figure \ref{fig:Diagram}. Here the longest row consists of $\lambda_1$ squares of size $1$, the next longest one of $\lambda_2$ squares, and so on. 

\begin{figure}[ht]
\centering
\includegraphics[width=6cm]{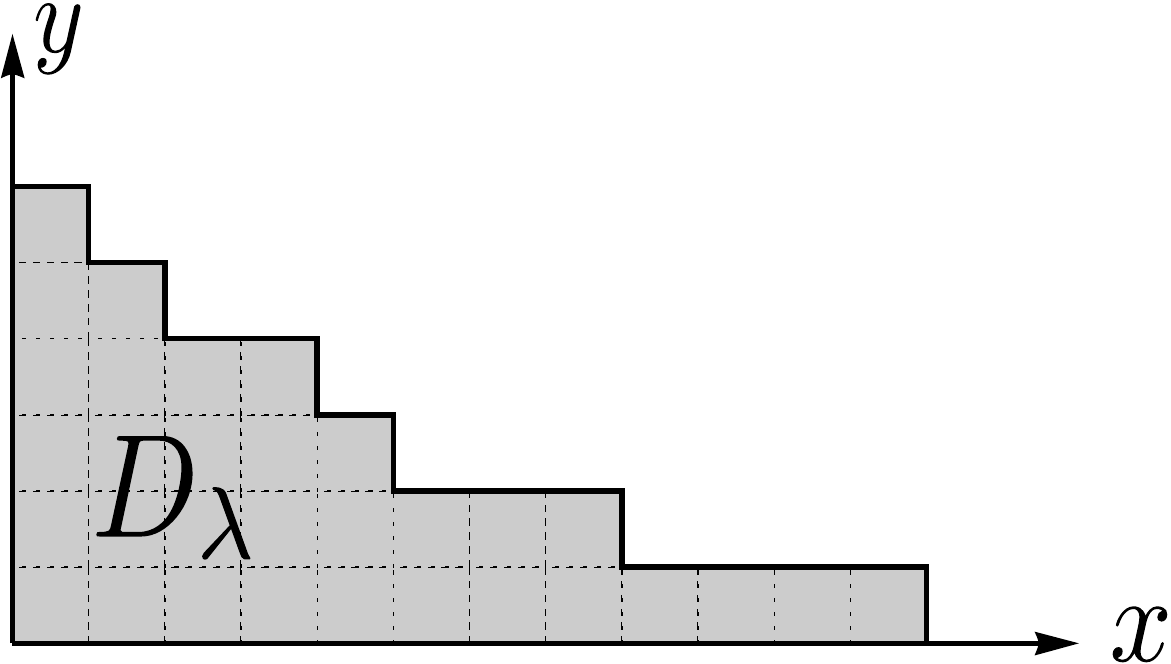}
\caption{\label{fig:Diagram} The Young diagram $\lambda=(12,8,5,4,2,1,0,0,\dots)$.}
\end{figure}
Scale the picture by $\sqrt{2n}$ in both directions so that the diagram has area $1/2$ and let $x$ and $y$ be the horizontal and vertical coordinates respectively. Rotate the scaled diagram by $\pi/4$ radians as in Figure \ref{fig:RotatedDiagram}. Let $X,Y$ be the horizontal and vertical coordinates in the rotated picture. We have $X=(x-y)/{\sqrt{2}}$ and $Y=(x+y)/{\sqrt{2}}$. Let $L_\lambda(X)$ be the function giving the top boundary of the rotated scaled diagram. $L_\lambda(X)$ is a piecewise linear function of slopes $\pm 1$ such that $L_\lambda(X)=|X|$ for $|X|\gg 1$. If $D_\lambda$ represents the interior of the scaled 
\begin{figure}[ht]
\centering
\includegraphics[width=7cm]{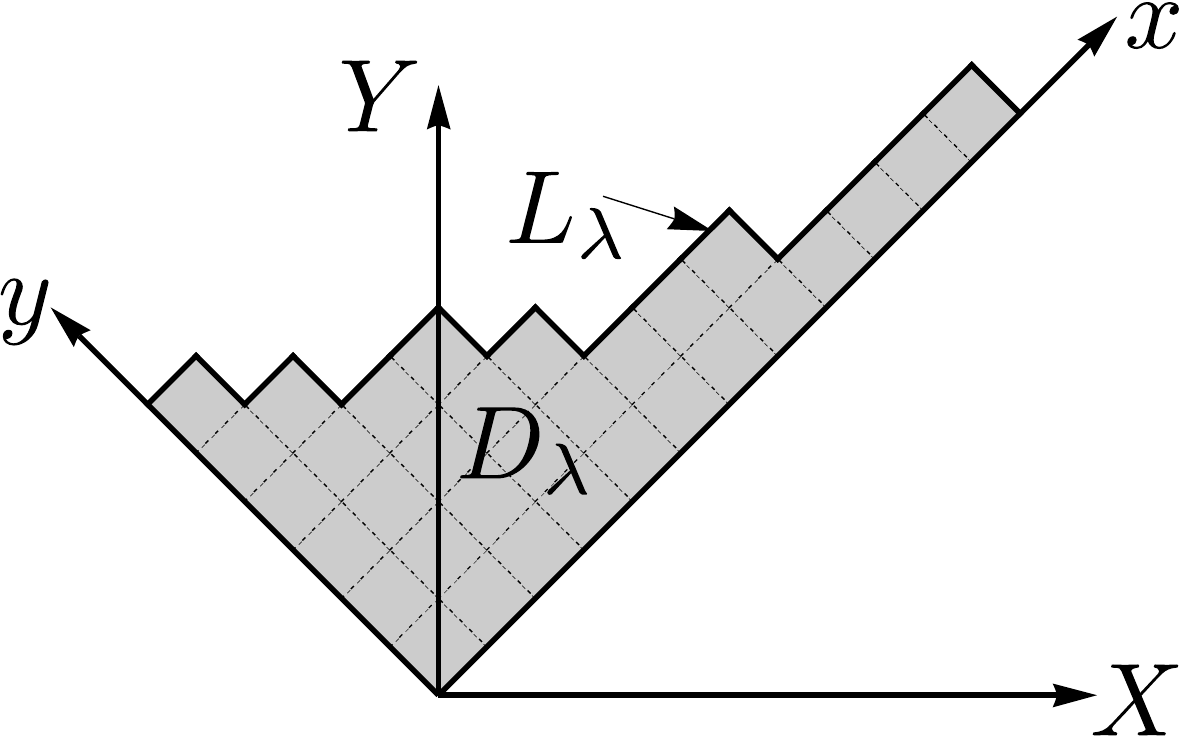}
\caption{\label{fig:RotatedDiagram} A rotated Young diagram.}
\end{figure}
Young diagram, then in the $(X,Y)$ coordinate system it can be characterized as
$$
D_\lambda=\{(X,Y):|X|\leq Y\leq L_\lambda(X)\}.
$$

In 1977 Vershik and Kerov and independently and simultaneously Logan and Shepp proved that the scaled random Young diagrams have a limit shape. 
\begin{theorem}[Vershik and Kerov \cite{VK77}, Logan and Shepp \cite{LoganShepp}]
\label{thm:VKLS}
For any $\varepsilon>0$
$$\lim_{n\rightarrow\infty} \mathbb{P}l^n\{\lambda\in\mathbb{Y}^n:|L_\lambda(X)-\Omega(X)|\leq\varepsilon, \forall X\in\mathbb{R}\}=1,$$
where $\Omega(X)$ is given by
\begin{equation*}
\Omega(X)=
\left\{\begin{array}{cl}
\frac 2\pi\left(\sqrt{1-X^2}+X\arcsin(X)\right),& |X|\leq 1,\\
|X|,&|X|>1.
\end{array}\right.
\end{equation*}
\end{theorem}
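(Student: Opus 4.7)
The natural approach is a variational analysis of the Plancherel measure in the spirit of the original Vershik--Kerov and Logan--Shepp arguments. The plan is to express $\mathbb{P}l^n(\lambda) = (\dim \lambda)^2/n!$ via the hook length formula, rescale in the rotated coordinates by $\sqrt{2n}$, and show that the rescaled log-probability is asymptotically $n$ times an explicit functional $J$ evaluated on the continuous profile $L_\lambda$.

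First I would use the hook length formula
$$
\dim \lambda = \frac{n!}{\prod_{(i,j) \in \lambda} h(i,j)}
$$
to rewrite $\ln \mathbb{P}l^n(\lambda) = 2\ln \dim \lambda - \ln n!$. In the rotated coordinates $(X,Y)$ with scaling factor $\sqrt{2n}$, the sum $\sum_{(i,j) \in \lambda} \ln h(i,j)$ becomes a Riemann-sum approximation to an explicit double integral over $D_\lambda$ with a logarithmic kernel; combined with Stirling's formula for $\ln n!$ this yields
$$
\frac{1}{n}\ln \mathbb{P}l^n(\lambda) = -J(L_\lambda) + o(1),
$$
where $J$ is a logarithmic energy functional defined on the set of admissible profiles (namely, $1$-Lipschitz functions that equal $|X|$ outside a compact set and enclose area $1$ with the graph of $|X|$).

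Next I would analyze the variational problem of minimizing $J$ over admissible profiles. The Euler--Lagrange equation with the area constraint forces the minimizer to satisfy a singular integral equation of the form
$$
\int L'(X_1)\, \ln|X - X_1|\, dX_1 = \const
$$
on the support of $L(X) - |X|$, which can be inverted by standard Hilbert transform techniques to recover $\Omega$ explicitly. Strict convexity of $J$, a standard consequence of the positivity of the logarithmic energy on signed measures of zero total mass, then identifies $\Omega$ as the unique minimizer.

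The final step is to convert this variational characterization into concentration in the uniform norm. For fixed $\varepsilon > 0$ the set $U_\varepsilon = \{L : \|L - \Omega\|_\infty > \varepsilon\}$ is bounded away from $\Omega$ in the logarithmic energy, so $\inf_{U_\varepsilon} J > J(\Omega)$; combining this with the crude upper bound $\#\mathbb{Y}^n \leq e^{O(\sqrt n)}$ on the number of Young diagrams of size $n$ yields $\mathbb{P}l^n(L_\lambda \in U_\varepsilon) \to 0$. The main technical obstacle is controlling the passage from the discrete hook sum to the continuum functional $J$, especially near the diagonal where the logarithmic integrand is singular and near the boundary of the support where $L(X)-|X|$ vanishes; this requires a careful regularization of discrete profiles and uniform estimates on the approximation error that are valid over the entire space of admissible shapes.
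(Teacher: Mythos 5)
Your outline is correct in substance: it is the classical Vershik--Kerov/Logan--Shepp argument (hook formula $\to$ hook integral $\to$ variational problem $\to$ entropy bound via the number of partitions), and the paper does not reprove Theorem~\ref{thm:VKLS} at all --- it cites \cite{VK77,LoganShepp} and instead runs exactly this strategy for the deformed measures $\mathbb{P}_N^n$ (Proposition~\ref{prop:measure}, Proposition~\ref{prop:integral}, Corollary~\ref{cor:UniqueMinimizer}, and the Hardy--Ramanujan count in Section~\ref{sec:MainProofs}). The one genuine divergence is in how the minimizer is identified. You propose deriving the Euler--Lagrange equation $\int L'(X_1)\ln|X-X_1|\,dX_1=\const$ and inverting it by Hilbert-transform methods to \emph{discover} $\Omega$; the paper's route (following Vershik--Kerov's Lemma~4, and its deformation in Proposition~\ref{prop:integral}) is to \emph{verify} an exact completed-square identity, $\theta(L)=\tfrac12\|L-\Omega\|_{1/2}^2$, which in one stroke identifies the minimizer, proves uniqueness, and supplies the quantitative coercivity ($\inf_{\|L-\Omega\|_\infty>\varepsilon}\theta>0$) that your concentration step needs. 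Your inversion approach buys a derivation of $\Omega$ from first principles rather than an ansatz, but then still requires a separate strict-convexity/positivity argument to get the uniform gap on $U_\varepsilon$; the identity approach is what makes the quadratic-variation bounds of the paper's main theorems possible. The technical obstacle you flag --- uniform (in $\lambda$) control of the sum-to-integral error near the diagonal singularity --- is real and is precisely what the explicit correction terms $\hat\theta(\lambda)$ and $\varepsilon_n$ in Proposition~\ref{prop:measure} handle; note also that the Lipschitz and compact-support properties of $L_\lambda$ keep the logarithmic energy finite, and that the error only needs to be $o(n)$ uniformly since the entropy is $O(\sqrt n)$.
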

Note that $\Omega(X)$ has the rather simple derivative $$\Omega'(X)=\frac 2{\pi}\arcsin(X)\text{ for }|X|\leq1.$$

The measure $\mathbb{P}_N^n$ is a deformation of the Plancherel measure in the following way. When $N\geq n$, $(\mathbb{C}^N)^{\otimes n}$ contains a copy of the regular representation of $S_n$, whence a copy of each irreducible representation of $S_n$. As a consequence, when $N\geq n$, $\mathbb{Y}_N^n$ coincides with the set $\mathbb{Y}^n$ of all Young diagrams on $n$ cells. Moreover, in the limit $N\rightarrow \infty$ the measure $\mathbb{P}_N^n$ converges to the Plancherel measure on $\mathbb{Y}^n$ (see, for example, \cite[Section 3]{OlshNotes}). 

It follows from Theorem \ref{thm:VKLS} that the number of rows in a typical (with respect to the Plancherel measure) Young diagram with $n$ cells is of order $\sqrt{2n}$ \cite{VK85}. Thus, when studying asymptotic properties of Young diagrams with restricted number of rows and sampled according to the deformed Plancherel measures $P_N^n$, it is natural to consider the limit when the restriction on the number of rows grows on the order of the square root of the number of cells. 

P. Biane \cite{Biane2001} generalized the limit shape result for the Plancherel measure to the measures $\mathbb{P}_N^n$. Using methods of free probability theory he showed that in the limit $n,N\rightarrow\infty$, when ${\sqrt{n}}/{N}\rightarrow c\in[0,\infty)$, the shape of a typical scaled Young diagram chosen from $\mathbb{Y}_N^n$ according to the measure $\mathbb{P}_N^n$ converges to a curve $\Omega_c(s)$ which is a continuous deformation (depending on $c$) of the limit shape found in \cite{VK77} and \cite{LoganShepp}. When $c=0$, the limit shape $\Omega_c(s)$ is the Vershik-Kerov-Logan-Shepp limit shape: $\Omega_0(s)=\Omega(s)$. This is not surprising, since $c=0$ implies $N\gg\sqrt{2n}$, which means that the restriction on the number of rows in the Young diagrams $\lambda\in\mathbb{Y}_N^n$ is very weak. 

Before we state Biane's theorem exactly, let us define the function $\Omega_c(s)$. For $2s\in[c-2,c+2]$ define
\begin{multline*}
h(c,s):=\frac{2}{\pi} \(s \arcsin\(\frac{2 s + c}{2 \sqrt{1 + 2 s c}}\)
		\right.\\\left.
		+\frac{1}{2 c} \arccos\(\frac{2 + 2 s c - c^2}{2 \sqrt{1 + 2 s c}}\)+\frac 14 \sqrt{4 - (2 s - c)^2}\)
\end{multline*}
when $0<c<\infty$ and extend it continuously to $c=0$:
$$
h(0,s)=\frac{2}{\pi} \(s \arcsin(s)+\sqrt{1 - s^2}\).
$$
Define the function $\Omega_c(s)$ as follows:
\begin{figure}[t]
\centering
\includegraphics[width=12cm]{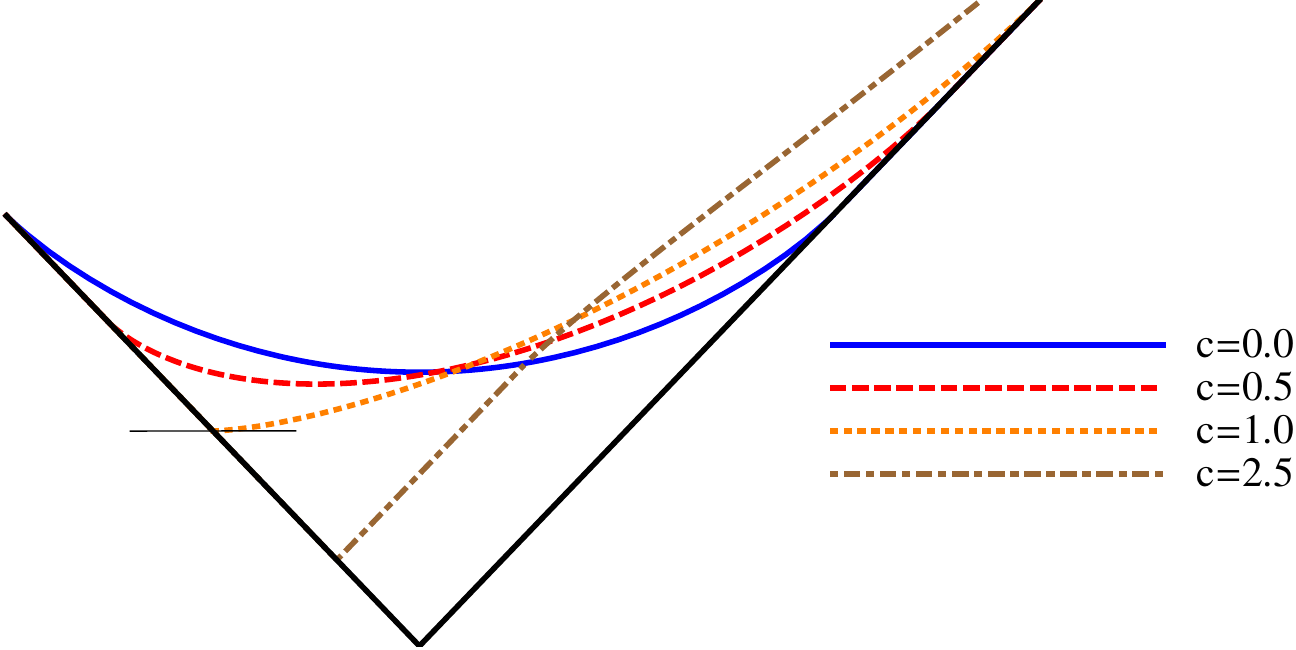}
\caption{\label{fig:Omegac}Graphs of $\Omega_c(s)$ for $c=0,0.5,1,2.5$.}
\end{figure}

\begin{equation*}
	\Omega_c(s)=\left\{
	\begin{array}{cl}
		h(c,s),&2s\in[c-2,c+2],
		\\|s|,&2s\notin[c-2,c+2]
	\end{array}\right.
\end{equation*}
if $0\leq c\leq 1$, and
\begin{equation*}
	\Omega_c(s)=\left\{
	\begin{array}{cl}
		s+\frac 1c ,&2s\in[-\frac 1c,c-2],
		\\h(c,s),&2s\in[c-2,c+2],
		\\|s|,&2s\notin[-\frac 1c,c+2],
	\end{array}\right.
\end{equation*}
if $c>1$. See Figure \ref{fig:Omegac} for graphs of the functions $\Omega_c(s)$ for several values of $c$. The graphs of the functions $\Omega_c(s)$ intersect the graph of $|s|$ at two points. All the intersections are tangential except the intersections on the left side for $c\geq 1$. At the left intersection point the graph of $\Omega_1(s)$ has slope $0$, while the graph of $\Omega_c(s)$ when $c>1$ has slope $1$.

Notice that $\Omega_c$ has a rather simple derivative: 
\begin{equation*}
\Omega_c'(s)=\frac 2\pi \arcsin\(\frac{c+2s}{2\sqrt{1+2cs}}\)
\end{equation*}
for $2s\in[c-2,c+2]$.

\begin{theorem}[P. Biane \cite{Biane2001}, Theorem 3]
\label{thm:Biane}
For any $\varepsilon>0$
$$\lim_{n,N\rightarrow\infty,\frac{\sqrt{n}}N\rightarrow c} \mathbb{P}_N^n\{\lambda\in\mathbb{Y}_N^n:|L_\lambda(X)-\Omega_c(X)|\leq\varepsilon, \forall X\in\mathbb{R}\}=1.$$
\end{theorem}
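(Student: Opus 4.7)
The plan is to prove Theorem \ref{thm:Biane} by showing that $\Omega_c$ is the unique minimizer of an explicit energy functional $F_c$ on the space of rescaled Young-diagram profiles, and that the measure $\mathbb{P}_N^n$ concentrates around this minimizer at scale $1/\sqrt{n}$. Schur--Weyl duality factors $\dim E_\lambda = \dim V_\lambda \cdot s_\lambda(1^N)$, where $s_\lambda(1^N)$ is the Weyl dimension of the $GL(N)$-module of highest weight $\lambda$, so $\ln \dim E_\lambda$ splits into a hook-length sum from $\dim V_\lambda$ and a product over pairs of rows from $s_\lambda(1^N)$.

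First I would convert both pieces into functionals of the rescaled profile $L_\lambda$. Stirling-type expansions, as in Vershik--Kerov, turn the hook-length contribution into a logarithmic-energy double integral
$$
J(L) = -\iint_{X<Y} \ln|X-Y|\, dL'(X)\, dL'(Y) + \const,
$$
while the Weyl dimension formula produces a linear functional $K(L)$. Setting $F_c = J + c\,K$ one obtains
$$
-\tfrac{1}{\sqrt{n}} \ln \mathbb{P}_N^n(\lambda) = F_c(L_\lambda) - \min F_c + o(1)
$$
uniformly in $\lambda \in \mathbb{Y}_N^n$, where the admissible class for $F_c$ consists of $1$-Lipschitz functions $L$ with $L(X) \geq |X|$, area $\int (L(X)-|X|)\,dX = 1/2$, and compact support; when $c > 1$ the row bound $\lambda_1' \leq N$ forces the additional linear constraint $L(X) = X + 1/c$ on a left segment.

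Next I would carry out the variational analysis. Strict convexity of the logarithmic energy $J$ on signed measures of fixed mass, together with linearity of $K$, gives uniqueness of the minimizer. The Euler--Lagrange equation is a singular integral equation for $L'$ on the support, and a direct substitution using the explicit derivative $\Omega_c'(s) = \frac{2}{\pi}\arcsin\!\bigl(\tfrac{c+2s}{2\sqrt{1+2cs}}\bigr)$ already displayed in the text verifies that $\Omega_c$ solves it. The dichotomy between $c \leq 1$ and $c > 1$ in the description of $\Omega_c$ emerges as the natural complementary-slackness condition at the left endpoint: tangential contact with $|s|$ when $c < 1$, slope-$1$ contact with the forced linear piece when $c > 1$.

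Finally, fix $\varepsilon > 0$ and set $A_\varepsilon = \{\lambda : \sup_X |L_\lambda(X)-\Omega_c(X)| < \varepsilon\}$. By continuity of $F_c$ and uniqueness of its minimizer, profiles outside $A_\varepsilon$ satisfy $F_c(L_\lambda) \geq F_c(\Omega_c) + \delta(\varepsilon)$ for some $\delta > 0$. Combining the uniform approximation above with the Hardy--Ramanujan estimate $|\mathbb{Y}_N^n| \leq |\mathbb{Y}^n| = e^{O(\sqrt{n})}$ makes the total $\mathbb{P}_N^n$-mass on $A_\varepsilon^c$ bounded by $e^{O(\sqrt{n})} \cdot e^{-\sqrt{n}(\delta - o(1))}$, which vanishes as $n \to \infty$. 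The main obstacle is the very first step: obtaining the uniform approximation $-\tfrac{1}{\sqrt{n}}\ln \mathbb{P}_N^n(\lambda) = F_c(L_\lambda)-\min F_c + o(1)$ over all of $\mathbb{Y}_N^n$, not just over typical shapes. The hook and Weyl sums need careful Euler--Maclaurin bookkeeping, particularly near the left support boundary where the asymptotics change character at $c = 1$; once this is in hand, the variational and concentration steps follow from standard potential theory.
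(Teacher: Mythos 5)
Your overall architecture coincides with the paper's: Schur--Weyl duality together with the hook and Weyl dimension formulas, an Euler--Maclaurin passage from sums to an integral functional of the rescaled profile (Proposition \ref{prop:measure}), identification of $\Omega_c$ as the unique minimizer, and a union bound against the Hardy--Ramanujan count of partitions. The one genuinely different choice is how you identify the minimizer: you propose strict convexity of the logarithmic energy plus verification of the Euler--Lagrange (singular integral) equation with complementary slackness at the left endpoint, whereas the paper proves the exact identity $\theta(L)-\rho(L)=\tfrac12\|L-\Omega_c\|_{\frac12}^2+2\int_{|s-c/2|>1}H_c'(s)\,(L-\Omega_c)(s)\,ds$ (Proposition \ref{prop:integral}) and checks that the second term is nonnegative (Corollary \ref{cor:UniqueMinimizer}). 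The paper's identity delivers in one stroke both uniqueness and the quantitative coercivity $\delta(\varepsilon)>0$ that your concentration step requires; on your route you would still need a lower-semicontinuity or compactness argument to upgrade ``unique minimizer'' to ``profiles $\varepsilon$-far from $\Omega_c$ pay at least $\delta(\varepsilon)$.'' Also, the row bound does not force $L(X)=X+1/c$ on a left segment as a constraint on admissible profiles; the constraints are $L(X)=|X|$ for $X\le -\tfrac1{2c}$ and $L(X)\le X+\tfrac1c$, and the linear piece of $\Omega_c$ for $c>1$ is where the latter constraint is active --- which is indeed the complementary-slackness phenomenon you describe.

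There is, however, a genuine error in the scaling of your key expansion, and it breaks the final step as written. The correct expansion is $-\tfrac{1}{\sqrt n}\ln\mathbb{P}_N^n(\lambda)=\sqrt n\,(\theta(\lambda)-\rho(\lambda))+O(1)$: the functional deviation from the limit shape enters $-\ln\mathbb{P}_N^n(\lambda)$ at order $n$, not at order $\sqrt n$. (Consistently, for typical $\lambda$ the quantity $-\tfrac1{\sqrt n}\ln\mathbb{P}_N^n(\lambda)$ tends to a positive constant coming from the $O(1)$ correction terms $\hat\theta-\hat\rho$, not to $0$; so your uniform ``$+o(1)$'' cannot hold either.) With your normalization $-\tfrac{1}{\sqrt n}\ln\mathbb{P}_N^n(\lambda)=F_c(L_\lambda)-\min F_c+o(1)$, the deviation cost for $\lambda\notin A_\varepsilon$ is only $e^{-\sqrt n\,\delta(\varepsilon)}$, while the number of diagrams is $e^{\frac{2\pi}{\sqrt6}\sqrt n(1+o(1))}$; since $\delta(\varepsilon)\to0$ as $\varepsilon\to0$, the product $e^{O(\sqrt n)}e^{-\sqrt n\,\delta}$ does not vanish and the union bound fails. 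Once the missing factor of $\sqrt n$ is restored the bound becomes $e^{O(\sqrt n)}e^{-n\,\delta(\varepsilon)}\to0$ and your argument closes exactly as in the paper.
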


In this article we obtain a new proof of Biane's theorem.

\subsection{Outline of the article}
The first step is to obtain multiplicative formulas for the dimensions $\dim E_\lambda$. Schur--Weyl duality gives a characterization of $E_\lambda$ in terms of irreducible representations $V_\lambda$ and $W_\lambda$ of $S_n$ and the general linear group $GL(N,\mathbb{C})$ respectively, allowing us to express $\dim E_\lambda$ in terms of $\dim V_\lambda$ and $\dim W_\lambda$. For the dimensions of irreducible representations of $S_n$ we use the hook formula. For the dimensions of those irreducible representations of $GL(N,\mathbb{C})$ which appear in Schur--Weyl duality there are well-known multiplicative formulas (see Section \ref{sec:SchurWeyl}), which we use.

Taking the logarithm of $\dim E_\lambda$ the multiplicative formulas yield sums. The second step is to go from sums to integrals and calculate the correction terms, which we do in Section \ref{sec:IntegralFormula}. For the dimensions of irreducible representations of $S_n$ this was done by Vershik and Kerov \cite{VK85}.

The third and most difficult step is to prove that the integral part of $\dim E_\lambda$ has a unique minimizer and calculate the quadratic variation. The integral part can be viewed as a functional of the boundary function $L_\lambda$. In Section \ref{sec:UniqueMinimizer} we prove that the function $\Omega_c$ is the unique minimizer of this functional and prove that the quadratic variation is given by the $\frac 12$--Sobolev norm of $L_\lambda-\Omega_c$.

In Section \ref{sec:MainProofs} we present the proofs of the main theorems.

\subsection{Acknowledgements}
I am very grateful to Alexander Bufetov for many useful discussions on the subject. I am also very grateful to Grigori Olshanski for communicating this problem to me and for comments and suggestions.

\section{Schur--Weyl Duality}
\label{sec:SchurWeyl}

Notice that the general linear group $GL(N,\mathbb{C})$ also acts naturally on the tensor product space $(\mathbb{C}^N)^{\otimes n}$. In coordinates, if $(v_1,v_2,\ldots,v_n)\in(\mathbb{C}^N)^{\otimes n}$ and $A\in GL(N,\mathbb{C})$, then
$$
A\cdot(v_1,v_2,\ldots,v_n)=(Av_1,Av_2,\ldots,Av_n).
$$
It is easy to see that the actions of $S_n$ and $GL(N,\mathbb{C})$ commute. These actions give embeddings $S_n\hookrightarrow\End\left((\mathbb{C}^N)^{\otimes n}\right)$ and $GL(N,\mathbb{C})\hookrightarrow\End\left((\mathbb{C}^N)^{\otimes n}\right)$. Let $\mathfrak{a}_{S_n}$ and $\mathfrak{a}_{GL(N,\mathbb{C})}$ be the subalgebras of $\End\left((\mathbb{C}^N)^{\otimes n})\right)$ generated by the images of $S_n$ and $GL(N,\mathbb{C})$ respectively. Schur--Weyl duality \cite{W,FultonHarris} asserts that the subalgebras $\mathfrak{a}_{S_n}$ and $\mathfrak{a}_{GL(N,\mathbb{C})}$ are centralizers of each other in $\End\left((\mathbb{C}^N)^{\otimes n}\right)$. It follows \cite{FultonHarris} that the space $(\mathbb{C}^N)^{\otimes n}$ decomposes into a direct sum of tensor products of irreducible representations of the groups $S_n$ and $GL(N,\mathbb{C})$:
\begin{equation*}
(\mathbb{C}^N)^{\otimes n}=\bigoplus_{i\in I} V_i\otimes W_i,
\end{equation*}
where $V_i$-s are irreducible representations of $S_n$ and $W_i$-s are irreducible representations of $GL(N,\mathbb{C})$. Moreover, given $i\in I$, the isotypic component of $(\mathbb{C}^N)^{\otimes n}$ corresponding to $V_i$ is $V_i\otimes W_i$, and the same is true for $W_i$. It can also be obtained from Schur--Weyl duality \cite{FultonHarris} that the index set $I$ is the set $\mathbb{Y}_N^n$ of Young diagrams with $n$ cells and at most $N$ rows, and that given $\lambda\in I=\mathbb{Y}_N^n$, $W_\lambda$ is the irreducible highest weight representation of $GL(N,\mathbb{C})$ with highest weight $\lambda=(\lambda_1\geq\lambda_2\geq\ldots\geq\lambda_N)$.

As mentioned above, the isotypic components $E_\lambda$ are $E_\lambda = V_\lambda\otimes W_\lambda$. Thus, we have $\dim E_\lambda = \dim V_\lambda \cdot \dim W_\lambda$. The dimensions of the representations $V_\lambda$ are given by the hook formula. Given a Young diagram $\lambda$ and a pair of natural numbers $(i,j)$ we will say that $(i,j)\in\lambda$ if $j\leq \lambda_i$. For $(i,j)\in\lambda$ the cell $(i,j)$ is the cell in the $i$-th row and $j$-th column in the Young diagram $\lambda$. The hook of a cell $(i,j)$ is defined to be the set of cells to the right and above the cell, including the cell itself, as shown in Figure \ref{fig:hook}. 
\begin{figure}[ht]
\centering
\includegraphics[width=6cm]{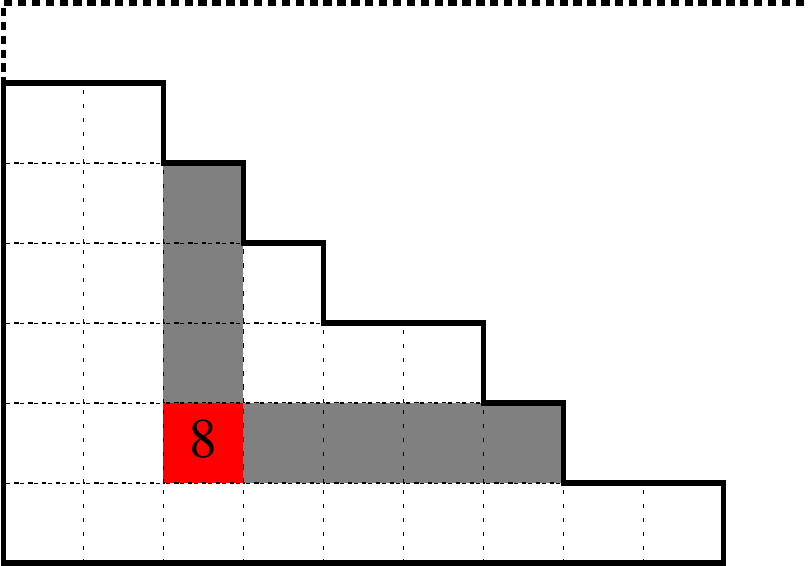}
\caption{\label{fig:hook} The hook and hook length of the cell $(2,3)$ in the Young diagram $\lambda=(9,7,6,4,3,2,0)\in\mathbb{Y}_7^{31}$.}
\end{figure}
The hook length $h_{i,j}$ of a cell $(i,j)\in\lambda$ is the number of cells in its hook. The following formula for $\dim V_\lambda$ is called the hook formula \cite{FultonHarris}:
\begin{equation}
\label{eq:HookFormula}
\dim V_\lambda=\frac{n!}{\prod_{(i,j)\in\lambda}h_{i,j}}.
\end{equation}

The content of the cell $(i,j)$ is defined to be $c_{i,j}:=j-i$. The dimension of the representation $W_\lambda$ is given by the following formula \cite{MacDonald}:
\begin{equation}
\label{eq:dimHighestWeight}
\dim W_\lambda = \frac{\prod_{(i,j)\in\lambda}(N+c_{i,j})}{\prod_{(i,j)\in\lambda}h_{i,j}}.
\end{equation}
Combining this with the hook formula we obtain
\begin{equation}
\label{eq:dimIso}
\dim E_\lambda = \frac{n!}{\prod_{(i,j)\in\lambda}h_{i,j}} \frac{\prod_{(i,j)\in\lambda}(N+c_{i,j})}{\prod_{(i,j)\in\lambda}h_{i,j}}.
\end{equation}

\section{An Integral Formula for the Measure}
\label{sec:IntegralFormula}
The goal of this section is to obtain an integral formula for $$\ln(\mathbb{P}_N^n(\lambda))=\ln\left(\frac{\dim E_\lambda}{N^n}\right).$$ For this purpose we need to introduce the continuous version of hook length. 

For a bounded region $d$ between the positive coordinate semi-axes and a top boundary given by a nonincreasing nonnegative function define the hook at $(x,y)\in d$ to be
$$
h_d(x,y):=\sup\{t:(x,t)\in d\}+\sup\{t:(t,y)\in d\}-x-y.
$$
\begin{figure}[ht]
\centering
\includegraphics[width=6cm]{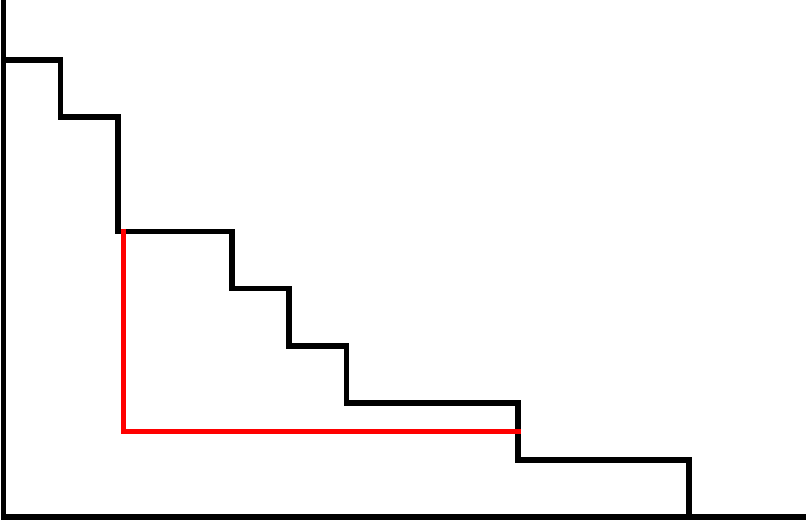}
\caption{\label{fig:ContHook} Continuous hook length.}
\end{figure}
For a Young diagram $\lambda$, $D_\lambda$ as defined in Section \ref{sec:LimitShapes} is such a bounded region, whence continuous hook length is defined for it (see Figure \ref{fig:ContHook}). To simplify the notation, we will denote $h_\lambda:=h_{D_\lambda}$. Introduce the coordinates $s$ and $t$ as $x=L_\lambda(t)+t$ and $y=L_\lambda(s)-s$. In these coordinates the hook at a point $(x,y)$ is given by $h_\lambda(x,y)=2(s-t)$ \cite{VK85}.

\begin{proposition}
\label{prop:measure}
For any $\lambda\in\mathbb{Y}_N^n$ we have
\begin{equation}
\label{eq:munN}
-\frac{\ln \mathbb{P}_N^n(\lambda)}{\sqrt{n}} = \sqrt{n}(\theta(\lambda)-\rho(\lambda))+\hat{\theta}(\lambda)-\hat{\rho}(\lambda)-\varepsilon_n,
\end{equation}
where
\begin{equation*}
\theta(\lambda)=1+2\iint_{(x,y)\in D_\lambda} \ln h_\lambda(x,y) dx dy,
\end{equation*}
\begin{equation*}
\rho(\lambda)=2\iint_{(x,y)\in D_\lambda}\ln\(1+\frac{\sqrt{2n}}{N}(x-y)\)dxdy,
\end{equation*}
$$\hat{\theta}(\lambda)=\frac {1}{\sqrt{n}}\sum_{(i,j)\in\lambda}m(h_{i,j}),$$
$$\hat{\rho}(\lambda)=\frac{1}{2\sqrt{n}}\sum_{(i,j)\in\lambda}m(N+c_{i,j}),$$
$$m(x)=\sum_{k=1}^\infty \frac{1}{k(k+1)(2k+1)}\frac 1{x^{2k}},$$
and $\varepsilon_n=o((\ln n)/{\sqrt{n}})$ is independent of $\lambda$.
\end{proposition}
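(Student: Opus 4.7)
The starting point is equation (\ref{eq:dimIso}). Taking logarithms yields
\[
-\ln\mathbb{P}_N^n(\lambda)=A(\lambda)+B(\lambda),
\]
where
\[
A(\lambda):=2\sum_{(i,j)\in\lambda}\ln h_{i,j}-\ln n!,\qquad B(\lambda):=n\ln N-\sum_{(i,j)\in\lambda}\ln(N+c_{i,j})=-\sum_{(i,j)\in\lambda}\ln\!\bigl(1+\tfrac{c_{i,j}}{N}\bigr).
\]
The first piece is precisely $-\ln\mathbb{P}l^n(\lambda)$ and depends only on the hook lengths; it was handled by Vershik and Kerov in \cite{VK85}. The second is the ``content'' contribution, and will be treated by an analogous method.

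For $A(\lambda)$ we follow \cite{VK85}. Using the scaling $h_{i,j}=\sqrt{2n}\,h_\lambda(x_{i,j},y_{i,j})$ at the scaled cell center, we split $\sum\ln h_{i,j}=\tfrac{n}{2}\ln(2n)+\sum\ln h_\lambda(x_{i,j},y_{i,j})$; the latter sum, weighted by the cell area $1/(2n)$, is a Riemann sum for $2n\iint_{D_\lambda}\ln h_\lambda\,dx\,dy$. Stirling's formula $\ln n!=n\ln n-n+O(\ln n)$ then cancels the $n\ln n$ terms and produces the constant $1$ in $\theta(\lambda)$. The Riemann approximation fails near the boundary of $D_\lambda$, where $h_\lambda\to 0$; VK extract this boundary defect via a cell-wise Euler--Maclaurin-type identity (comparing $\ln h$ with $\int\ln t\,dt$ on a short window around $h$) and telescope the resulting $\ln t$--integrals across cells to recover the global integral $\iint_{D_\lambda}\ln h_\lambda$. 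The tail of the expansion contributes $\sum m(h_{i,j})$, and one obtains
\[
A(\lambda)=n\theta(\lambda)+\sum_{(i,j)\in\lambda}m(h_{i,j})+O(\ln n)=n\theta(\lambda)+\sqrt n\,\hat\theta(\lambda)+O(\ln n),
\]
uniformly in $\lambda$.

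For $B(\lambda)$ the same strategy applies. With $c_{i,j}=\sqrt{2n}(x_{i,j}-y_{i,j})$, the Riemann-sum approximation (weight $1/(2n)$ per cell) gives
\[
-\sum\ln\!\bigl(1+\tfrac{c_{i,j}}{N}\bigr)\approx-2n\!\!\iint_{D_\lambda}\!\!\ln\!\bigl(1+\tfrac{\sqrt{2n}}{N}(x-y)\bigr)dx\,dy=-n\rho(\lambda).
\]
Applying the VK Euler--Maclaurin-type identity now to $\ln(N+c_{i,j})$ in place of $\ln h_{i,j}$, and telescoping, produces the correction $\tfrac12\sum m(N+c_{i,j})=\sqrt n\,\hat\rho(\lambda)$; the factor $\tfrac12$ (as opposed to $1$ in $A(\lambda)$) reflects that $B$ has coefficient $1$ in front of $\sum\ln(N+c_{i,j})$, whereas $A$ has coefficient $2$ in front of $\sum\ln h_{i,j}$. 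Hence $B(\lambda)=-n\rho(\lambda)-\sqrt n\,\hat\rho(\lambda)+O(\ln n)$. Summing $A$ and $B$ and dividing by $\sqrt n$ yields the stated formula with $\varepsilon_n=O((\ln n)/\sqrt n)$ uniformly in $\lambda$, hence $o((\ln n)/\sqrt n)$ after absorbing a slowly decaying factor.

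The main obstacle is the Euler--Maclaurin/telescoping step together with the uniform control of the boundary error: the $O(\sqrt n)$ cells along the staircase boundary of $D_\lambda$ individually contribute unbounded defects to the naive Riemann sum, and it is only after the $m$-series has been extracted cell by cell and the remaining $\ln t$--integrals have been spliced into the global area integrals $\iint_{D_\lambda}\ln h_\lambda$ and $\iint_{D_\lambda}\ln(1+\sqrt{2n}(x-y)/N)$ that the residual error becomes $O(\ln n)$, uniform in $\lambda$. Once that bookkeeping is done, everything else in the proof is a direct substitution.
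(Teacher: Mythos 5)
Your decomposition is exactly the paper's: factor $\mathbb{P}_N^n(\lambda)=\mathbb{P}l^n(\lambda)\prod_{(i,j)\in\lambda}(1+c_{i,j}/N)$, quote Vershik--Kerov for the Plancherel factor, and handle the content factor by comparing the cell sum with the integral $\rho(\lambda)$; so the route is the same. The one place where you diverge --- and where your write-up has a gap relative to the statement as written --- is the treatment of $B(\lambda)$. The paper needs no Euler--Maclaurin/telescoping argument and incurs no $O(\ln n)$ error there: because $\ln\bigl(1+\tfrac{\sqrt{2n}}{N}(x-y)\bigr)$ is an explicit function of $x-y$ with elementary antiderivative
$\iint\ln(1+x-y)\,dx\,dy=-\tfrac{(1+x-y)^2}{2}\ln(1+x-y)+\tfrac34(1+x-y)^2+C_1(x)+C_2(y)$,
the midpoint defect on each cell $\square_{i,j}$ is evaluated in closed form, and the series identity
$-3+(1+z^{-1})^2\ln(1+z)+(z^{-1}-1)^2\ln(1-z)=-\sum_{k\geq1}\tfrac{1}{k(k+1)(2k+1)}z^{2k}$
with $z=1/(N+c_{i,j})$ identifies the total defect with exactly $-\sqrt{n}\,\hat{\rho}(\lambda)$, with zero remainder. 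This exactness is not cosmetic: the proposition asserts that $\varepsilon_n$ is \emph{independent of} $\lambda$ (it comes solely from the cited Vershik--Kerov asymptotics for $\mathbb{P}l^n$), whereas your argument leaves a $\lambda$-dependent $O(\ln n)$ in $B$, which after dividing by $\sqrt{n}$ is $O((\ln n)/\sqrt{n})$ --- not $o((\ln n)/\sqrt{n})$, and not absorbable into a $\lambda$-independent $\varepsilon_n$ without further work. Replacing your appeal to ``the VK identity applied to $\ln(N+c_{i,j})$'' by this two-line computation closes the gap, and the factor $\tfrac12$ you inferred by coefficient-counting falls out automatically.
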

\begin{remark}
$\theta(\lambda)$ is called the hook integral. Vershik and Kerov \cite{VK85} gave the following formula for $\theta(\lambda)$ in terms of $L_\lambda$:
\begin{equation}
\label{eq:ThetaInL}
\theta(\lambda)=\theta(L_\lambda):=1+2\iint_{t<s}\ln(2(s-t))(1-L_\lambda'(s))(1+L_\lambda'(t))dsdt.
\end{equation}
\end{remark}
\begin{remark}
The integrand in $\rho(\lambda)$ is constant along vertical lines in the rotated coordinate system, whence the double integral can be easily reduced to a single integral to give
\begin{equation}
\label{eq:RhoInL}
\rho(\lambda)=\rho(L_\lambda):=2\int_{-\infty}^{\infty}\ln\(1+\frac{2\sqrt{n}}{N}s\)(L_\lambda(s)-|s|)ds.
\end{equation}
\end{remark}
Note that originally $\theta$ and $\rho$ were defined as functions on Young diagrams. However, in light of \eqref{eq:ThetaInL} and \eqref{eq:RhoInL} we will treat them as functionals.

\begin{proof}[of Proposition \ref{prop:measure}]
Using \eqref{eq:HookFormula} the Plancherel measure $\mathbb{P}l^n(\lambda)=\frac{(\dim V_\lambda)^2}{n!}$ can be written as
\begin{equation}
\label{eq:plancherel}
\mathbb{P}l^n(\lambda)=\frac{n!}{\(\prod_{(i,j)\in\lambda}h_{i,j}\)^2},
\end{equation}
while using \eqref{eq:dimIso} the measure $\mathbb{P}_N^n(\lambda)=(\dim E_\lambda)/{N^n}$ can be written as
$$
\mathbb{P}_N^n(\lambda)= \frac{n!}{\(\prod_{(i,j)\in\lambda}h_{i,j}\)^2} \frac{\prod_{(i,j)\in\lambda}(N+c_{i,j})}{N^n}=\mathbb{P}l^n(\lambda)\prod_{(i,j)\in\lambda}\(1+\frac{c_{i,j}}N\).
$$
Thus,
$$
-\frac{\ln \mathbb{P}_N^n(\lambda)}{\sqrt{n}}=-\frac{\ln \mathbb{P}l^n(\lambda)}{\sqrt{n}}-\frac 1{\sqrt{n}}\sum_{(i,j)\in\lambda}\ln\(1+\frac{c_{i,j}}{N}\).
$$
Note that even though $c_{i,j}$ can be negative, since we are only considering Young diagrams with at most $N$ rows, we have that $1+{c_{i,j}}/N$ is positive.

It was shown in \cite{VK85} that 
$$
-\frac{\ln \mathbb{P}l^n(\lambda)}{\sqrt{n}}=\sqrt{n}\theta(\lambda)+\hat{\theta}(\lambda)-\varepsilon_n.
$$
Let $\square_{i,j}$ denote the $(i,j)$-th box in the scaled Young diagram, and let $(x_i,y_j)$ denote the center of $\square_{i,j}$. Note that the area of $\square_{i,j}$ is $1/{2n}$. Using this notation we obtain
\begin{align*}
-&\frac 1{\sqrt{n}}\sum_{(i,j)\in\lambda}\ln\(1+\frac{c_{i,j}}{N}\)+\sqrt{n}\rho(\lambda)
\\&=2\sqrt{n}\(\sum_{(i,j)\in\lambda}\iint_{\square_{i,j}}\ln\(1+\frac{\sqrt{2n}}{N}(x-y)\)dxdy-\frac 1{2n}\ln\(1+\frac{\sqrt{2n}}{N}(x_i-y_j)\)\)
\\&=2\sqrt{n}\sum_{(i,j)\in\lambda}\iint_{\square_{i,j}}\(\ln\(1+\frac{\sqrt{2n}}{N}(x-y)\)-\ln\(1+\frac{\sqrt{2n}}{N}(x_i-y_j)\)\)dxdy
\\&=2\sqrt{n}\sum_{(i,j)\in\lambda}\int_{x_i-\frac{1}{2\sqrt{2n}}}^{x_i+\frac{1}{2\sqrt{2n}}}\int_{y_j-\frac{1}{2\sqrt{2n}}}^{y_j+\frac{1}{2\sqrt{2n}}}
\\&\qquad\qquad\qquad\qquad
\ln\(1+\frac{\sqrt{2n}}{N\(1+\frac{\sqrt{2n}}{N}(x_i-y_j)\)}((x-x_i)-(y-y_j))\)dydx
\\&=2\sqrt{n}\sum_{(i,j)\in\lambda}\int_{-\frac{1}{2\sqrt{2n}}}^{\frac{1}{2\sqrt{2n}}}\int_{-\frac{1}{2\sqrt{2n}}}^{\frac{1}{2\sqrt{2n}}}\ln\(1+\frac{\sqrt{2n}}{N\(1+\frac{\sqrt{2n}}{N}(x_i-y_j)\)}(x-y)\)dydx.
\end{align*}

Denote $\alpha_{i,j}:=N(1+\frac{\sqrt{2n}}{N}(x_i-y_j))=N+c_{i,j}$. We have 
$$
-\frac 1{\sqrt{n}}\sum_{(i,j)\in\lambda}\ln\(1+\frac{c_{i,j}}{N}\)+\sqrt{n}\rho(\lambda)=
2\sqrt{n}\sum_{(i,j)\in\lambda}\frac{\alpha_{i,j}^2}{2n}\int_{-\frac{1}{2\alpha_{i,j}}}^{\frac{1}{2\alpha_{i,j}}}\int_{-\frac{1}{2\alpha_{i,j}}}^{\frac{1}{2\alpha_{i,j}}}\ln(1+x-y)dydx.
$$
From
$$
\iint\ln(1+x-y)dxdy=-\frac{(1+x-y)^2}{2}\ln(1+x-y)+\frac 34 (1+x-y)^2+C_1(x)+C_2(y)
$$
it follows that
\begin{multline*}
-\frac 1{\sqrt{n}}\sum_{(i,j)\in\lambda}\ln\(1+\frac{c_{i,j}}{N}\)+\sqrt{n}\rho(\lambda)
\\=\frac 1{2\sqrt{n}}\sum_{(i,j)\in\lambda} \(-3+(\alpha_{i,j}+1)^2\ln\(1+\frac1{\alpha_{i,j}}\)+(\alpha_{i,j}-1)^2\ln\(1-\frac 1{\alpha_{i,j}}\)\).
\end{multline*}

The power series expansion
$$
-3+\(1+\frac 1z\)^2\ln(1+z)+\(\frac 1z-1\)^2\ln(1-z)=-\sum_{k=1}^\infty\frac{1}{k(k+1)(2k+1)}z^{2k}
$$
completes the proof.
$\qed$
\end{proof}

\section{The Limit Shape is the Minimizer}
\label{sec:UniqueMinimizer}

Given a function $g$ let $\tilde{g}$ be the function defined by $\tilde{g}(x):=g(x+c/2)$. Throughout the text we will use the following shifted coordinates: 
\begin{equation}
\label{eq:shiftedNot}
z=s-\frac c2,\ \ w=t-\frac c2.
\end{equation}
In particular, for any function $g$ we have $g(s)=\tilde{g}(z)$.

We will use the following notation:
$$
\delta_{S}=\left\{\begin{array}{ll}1,&S\text{ is true}\\0,&S\text{ is false}\end{array}\right.
$$
and
$$
\sign(x)=\left\{\begin{array}{cl}-1,&x<0\\0,&x=0\\1,&x>0\end{array}\right. .
$$

\begin{proposition}
\label{prop:integral}
Let $c=c_n={\sqrt{n}}/{N}>0$. Let $L(X)$ be an arbitrary continuous and piecewise differentiable function satisfying the following conditions (see Figure \ref{fig:L}):
\begin{figure}[t]
\centering
\includegraphics[width=7cm]{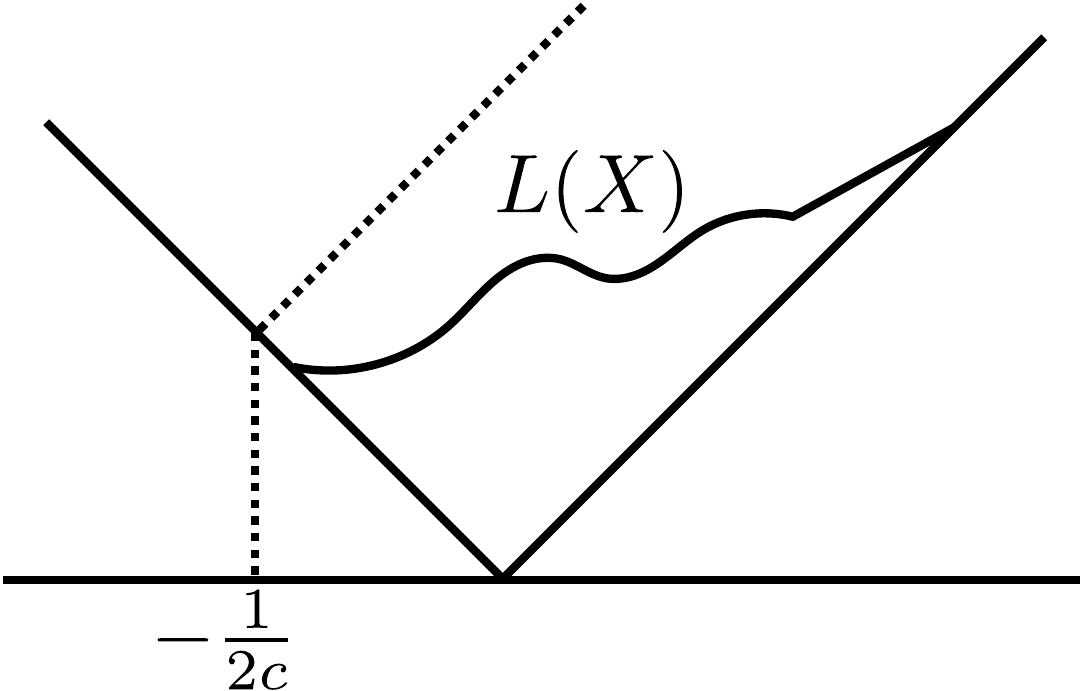}
\caption{The graph of $L(X)$.}
\label{fig:L}
\end{figure}
\begin{equation}
\label{eq:CondOnL}
\begin{array}{l}
 L(X)\geq |X|\text{ for all }X,
 \\|L'(X)|\leq 1\text{ for all }X\text{ where }L(X)\text{ is differentiable},
 \\L(X)=|X|\text{ for }X\gg 1\text{ and }X\leq-\frac{1}{2c},
 \\L(X)<X+\frac{1}{c}\text{ for }X\geq-\frac{1}{2c},
 \\\int_{-\infty}^\infty (L(X)-|X|)dX=\frac 12.
\end{array}
\end{equation}
Then
\begin{equation}
\label{eq:thetaRho-norm}
\theta(L)-\rho(L)=\frac 12 \|f\|_{\frac 12}^2+2\int_{|s-\frac c2|>1}H_c'(s)f(s)ds,
\end{equation}
where $f(s)=L(s)-\Omega_c(s)$,
\begin{multline*}
\tilde{H}_c(z):=
\delta_{|z|>1}\left(\left(z-\frac {1-c^2}{2c}\right) \arccosh|z|
\right.\\\left.\quad\quad\quad\quad +\sign(1-c)\left(z+\frac {1+c^2}{2c}\right)\arccosh\left|
      \frac{1+\frac{1+c^2}{2c}z}{z+\frac{1+c^2}{2c}}\right|-\sign(z)\sqrt{z^2-1}\right)
\end{multline*}
and
$$
\|f\|_{\frac 12}^2=\int_{-\infty}^{\infty}\int_{-\infty}^{\infty}{\(\frac{f(s)-f(t)}{s-t}\)^2}dsdt
$$
is the $\frac 12$--Sobolev norm in the space of piecewise-smooth functions.
\end{proposition}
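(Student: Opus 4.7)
The plan is to set $L = \Omega_c + f$ and expand $\theta-\rho$ around $\Omega_c$. The mass constraint in \eqref{eq:CondOnL} combined with the analogous constraint for $\Omega_c$ (implicit in its definition as a limit shape) forces $\int f\,ds = 0$, and $f$ has compact support since both $L$ and $\Omega_c$ eventually agree with $|s|$. Since $\rho$ is affine in $L$ and $\theta$ is quadratic in $L'$, one has
\[
\theta(L)-\rho(L) = \bigl[\theta(\Omega_c)-\rho(\Omega_c)\bigr] + \delta(\theta-\rho)[f] + \delta^{2}\theta[f,f],
\]
and I must identify each piece: the constant vanishes, the quadratic term equals $\tfrac12\|f\|_{1/2}^{2}$, and the linear term becomes $2\int_{|s-c/2|>1}H_c'(s)f(s)\,ds$ modulo $t$-independent constants (which are killed by $\int f = 0$).

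For the quadratic part, symmetrizing gives $\delta^{2}\theta[f,f] = -\iint \ln|2(s-t)|\,f'(s)f'(t)\,ds\,dt$. Since $\int f'\,ds = 0$, the $\ln 2$ contribution drops out, and the remaining logarithmic bilinear form $-\iint\ln|s-t|\,f'(s)f'(t)\,ds\,dt$ is evaluated by Plancherel---using $\widehat{f'}(\xi) = i\xi\hat f(\xi)$ and the distributional Fourier transform of $\ln|x|$---to yield $\tfrac12\|f\|_{1/2}^{2}$ in the paper's normalization of the $\tfrac12$-Sobolev norm.

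For the linear part, write $\delta\theta[f] = 2\int f'(t)\,\Phi(t)\,dt$ where
\[
\Phi(t) := \int_{t}^{\infty}\ln(2(s-t))(1-\Omega_c'(s))\,ds - \int_{-\infty}^{t}\ln(2(t-s))(1+\Omega_c'(s))\,ds.
\]
Integration by parts in $s$ (valid because $1\mp\Omega_c'(s)$ vanish outside a compact interval) yields $\Phi(t) = \int(s-t)[\ln|2(s-t)|-1]\,\Omega_c''(s)\,ds$, whence $\Phi'(t) = -\int\ln|2(s-t)|\,\Omega_c''(s)\,ds$. Integrating by parts once more in $t$ (using compact support of $f$) and combining with the linear variation of $\rho$ gives
\[
\delta(\theta-\rho)[f] = 2\int f(t)\,\Psi(t)\,dt, \qquad \Psi(t) := \int\ln|2(s-t)|\,\Omega_c''(s)\,ds - \ln(1+2ct).
\]

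The heart of the proof is the explicit evaluation of $\Psi$. Inserting the formula $\Omega_c'(s) = \tfrac{2}{\pi}\arcsin\bigl((c+2s)/(2\sqrt{1+2cs})\bigr)$ on the body $2s\in[c-2,c+2]$, together with the Dirac mass $2\delta_{-1/(2c)}$ that $\Omega_c''$ acquires at the left corner when $c\ge 1$, one must show: (i) $\Psi(t) \equiv \mathrm{const}$ for $|t-c/2|<1$---this is the Euler--Lagrange equation characterizing $\Omega_c$ as the stationary profile of $\theta - \rho$, consistent with Biane's description; (ii) $\Psi(t) = H_c'(t) + \mathrm{const}$ for $|t-c/2|>1$, matching $\tilde H_c$ after the shift $z = t - c/2$. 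The identity $\theta(\Omega_c) - \rho(\Omega_c) = 0$ then follows by testing the Euler--Lagrange equation from (i) against $\Omega_c - |\cdot|$ and invoking $\int(\Omega_c-|\cdot|)\,ds = \tfrac12$. I expect the main obstacle to be step (ii): the closed-form evaluation of the logarithmic potential of $\Omega_c''$ via $\arccos/\arccosh$ substitutions, with careful bookkeeping of principal values, branch choices, the $\sign(1-c)$ factor, and the delta contribution for $c\ge 1$. Everything else is essentially mechanical.
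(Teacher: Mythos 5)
Your plan is essentially the paper's own proof: the paper likewise splits $\theta(L)-\rho(L)$ into the Vershik--Kerov quadratic form $\tfrac 12\|f\|_{\frac 12}^2$, a linear term governed by the logarithmic potential of $\Omega_c''$ (Lemma \ref{lem:I}, whose $\ln(1+2cs)$ part cancels against $\rho$ and whose off-body part is exactly $H_c'$, including the point mass at $s=-\tfrac1{2c}$ for $c\ge 1$), and constant terms shown to vanish via Lemmas \ref{lem:F2-2} and \ref{lem:intIOmega'}. What you defer as "essentially mechanical" --- the closed-form evaluation of that potential and the determination of the additive constants --- is precisely the content of those lemmas, so your outline matches the paper's route with no substantive difference.
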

\begin{proof}
Define
$$
	\phi_0(x):=-\ln|2x|,\ \phi_k(x):=\int_0^x{\phi_{k-1}(y)}dy.
$$
Choose numbers $a<min\{-1+c/2,-{1}/(2c)\}$ and $b>1+ c/2$ such that $f(s)=0$ when $s\notin(a,b)$. This is possible since $L(s)=\Omega_c(s)=|s|$ when $|s|\gg 1$. Using \eqref{eq:ThetaInL} write $\theta(L)$ as follows
\begin{align}
	\theta(L)
	=&1-\int_a^b\int_a^b{\phi_0(s-t)}dsdt
		-2\int_a^b{\phi_1(a-t)L'(t)}dt
	\\\nonumber
	 &-2\int_a^b{\phi_1(b-t)L'(t)}dt
		+\int_a^b\int_a^b{\phi_0(s-t)L'(s)L'(t)}dsdt
	\\\nonumber
	=&-\int_{-\infty}^{\infty}\int_{-\infty}^{\infty}{\ln|2(s-t)|f'(s)f'(t)}dsdt+\theta_1(L)+\theta_2(L),
\end{align}
where
$$
	\theta_1(L):=1-\int_a^b\int_a^b{\phi_0(s-t)}dsdt-\int_a^b\int_a^b{\phi_0(s-t)\Omega_c'(s)\Omega_c'(t)}dsdt,
$$
$$
	\theta_2(L):=2\int_a^b{\(I_c(s)-\phi_1(a-s)-\phi_1(b-s)\)L'(s)}ds
$$
and
$$
	I_c(s):=\int_a^b{\phi_0(s-t)\Omega_c'(t)}dt.
$$
Vershik and Kerov \cite[Lemma 4]{VK85} have shown that $$-\int_{-\infty}^{\infty}\int_{-\infty}^{\infty}{\ln|2(s-t)|f'(s)f'(t)}dsdt=\frac 12 \|f\|_{\frac 12}^2.$$

Lemma \ref{lem:intIOmega'} below implies that
\begin{multline*}
\theta_1(L)=1-2\phi_2(b-a)-\int_a^b{I_c(s)\Omega_c'(s)}ds=\frac{c^2}4-2\int_a^b{G_c(s)\Omega_c'(s)}ds\\+2\int_a^b{H_c(s)\Omega_c'(s)}ds
-\delta_{c>1}\(1-\frac 5{4c^2}+\frac{c^2}4-\(2+\frac 1{c^2}\)\ln(c)\)
\end{multline*}
while Lemma \ref{lem:I} implies that
$$
\theta_2(L)=2\int_a^b{G_c(s)L_c'(s)}ds-2\int_a^b{H_c(s)L_c'(s)}ds
$$
(see \eqref{eq:G} for the definition of $G$).

Integrating by parts we obtain
\begin{multline*}
\theta_1(L)+\theta_2(L)=\frac{c^2}4+2(G_c(s)-H_c(s))f(s)\big|_a^b-2\int_a^b{(G_c'(s)-H_c'(s))f(s)}ds\\-\delta_{c>1}\(1-\frac 5{4c^2}+\frac{c^2}4-\(2+\frac 1{c^2}\)\ln(c)\).
\end{multline*}

Since $f(a)=f(b)=0$ and $G'_c(s)=-\ln|1+2cs|$, the above expression simplifies to
\begin{multline*}
\theta_1(L)+\theta_2(L)=\frac{c^2}4+2\int_a^b\ln|1+2cs|f(s)ds+2\int_a^b{H_c'(s)f(s)}ds\\-\delta_{c>1}\(1-\frac 5{4c^2}+\frac{c^2}4-\(2+\frac 1{c^2}\)\ln(c)\).
\end{multline*}

Combining the above results with formula \eqref{eq:RhoInL} for $\rho(L)$ we obtain
\begin{multline*}
\theta(L)-\rho(L)=\frac 12 \|f\|_{\frac 12}^2+2\int_a^b{H_c'(s)f(s)}ds+\frac{c^2}4
+2\int_{\frac c2-1}^{\frac c2+1}{\ln(1+2cs)(|s|-\Omega_c(s))}ds
\\-\delta_{c>1}\(1-\frac 5{4c^2}+\frac{c^2}4-\(2+\frac 1{c^2}\)\ln(c)\).
\end{multline*}
It follows from Lemma \ref{lem:F2-2} that the sum of all the terms except the first two is zero. This completes the proof.
$\qed$
\end{proof}

\begin{corollary}
\label{cor:UniqueMinimizer}
If $L$ is any function satisfying the conditions \eqref{eq:CondOnL} then $\theta(L)-\rho(L)\geq0$ with equality holding if and only if $L=\Omega_c$.
\end{corollary}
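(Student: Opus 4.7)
The plan is to reduce everything to the decomposition
$$
\theta(L)-\rho(L)=\frac 12 \|f\|_{\frac 12}^2+2\int_{|s-\frac c2|>1}H_c'(s)f(s)ds
$$
supplied by Proposition \ref{prop:integral}, with $f=L-\Omega_c$. Once this identity is in hand, the corollary splits into two independent positivity statements: one for the quadratic Sobolev term, and one for the linear boundary term. The first is a general feature of the $\frac 12$-seminorm, while the second encodes the fact that $\Omega_c$ is a minimizer of $\theta-\rho$ under the inequality constraints in \eqref{eq:CondOnL}.

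First I would note that $\frac 12\|f\|_{\frac 12}^2\geq 0$ directly from the definition as a double integral of the nonnegative integrand $\(\frac{f(s)-f(t)}{s-t}\)^2$, with equality forcing $f$ to be constant almost everywhere. Since both $L(X)$ and $\Omega_c(X)$ coincide with $|X|$ for all sufficiently large $|X|$ (and likewise for $X\leq -1/(2c)$), the function $f$ has compact support, so that constant value must be zero. Hence vanishing of the Sobolev term alone forces $f\equiv 0$, which will take care of the uniqueness clause as soon as the boundary integral is shown to be nonnegative.

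For the boundary integral, the plan is to verify the pointwise inequality $H_c'(s)f(s)\geq 0$ on $\{|s-c/2|>1\}$ by a case analysis aligned with the definition of $\Omega_c$. On the portion of this set where $\Omega_c(s)=|s|$, the first condition of \eqref{eq:CondOnL} gives $f(s)\geq 0$; on the portion present only when $c>1$ where $\Omega_c(s)=s+\frac 1c$, the fourth condition of \eqref{eq:CondOnL} gives $f(s)\leq 0$. Differentiating the explicit formula for $\tilde H_c$ and reading off the sign of $H_c'$ on each subregion should then produce the matching signs $H_c'\geq 0$ and $H_c'\leq 0$, respectively; morally, this is a Karush--Kuhn--Tucker-type sign condition witnessing that $\Omega_c$ solves the constrained variational problem and that a variation into the feasible set cannot decrease the functional.

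The principal obstacle is the sign analysis of $H_c'$. The function $\tilde H_c$ is assembled from two $\arccosh$ pieces whose arguments behave very differently near $c=1$, and the $\sign(1-c)$ prefactor means the formulas look qualitatively different for $c<1$ and $c>1$; moreover, when $c>1$ the auxiliary point $z=-(1+c^2)/(2c)$ enters the integration domain and produces an extra sign change. I would attack this by differentiating $\tilde H_c$ term by term and using the cancellations among the $1/\sqrt{z^2-1}$ factors generated by the $\arccosh$ derivatives, which should collapse $\tilde H_c'$ to a logarithmic expression whose sign in each subregion can be read off directly. Once this sign analysis is carried out, the two positivity statements combine to give $\theta(L)-\rho(L)\geq 0$, with equality forcing the Sobolev term to vanish and hence $f\equiv 0$.
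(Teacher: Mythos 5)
Your proposal is correct and follows essentially the same route as the paper: it invokes the decomposition of Proposition \ref{prop:integral}, observes that the Sobolev term is nonnegative and vanishes only for $f\equiv 0$ (using compact support), and reduces the boundary term to the pointwise sign matching of $f$ and $H_c'$ on $\{|s-c/2|>1\}$ via conditions \eqref{eq:CondOnL}. The only cosmetic difference is in how the sign of $\tilde H_c'$ is settled --- you propose to read it off directly from the differentiated formula \eqref{eq:dH} by comparing the two $\arccosh$ arguments, whereas the paper computes $\tilde H_c''$ and uses the vanishing of $\tilde H_c'$ at $z=\pm 1$; both work.
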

\begin{proof}
It is immediate that the first term on the left-hand side of \eqref{eq:thetaRho-norm} is nonnegative. Thus, it is enough to show that the integrand in the second term is nonnegative as well. Recall that $\Omega_c(s)=s$ if $s>1+ c/2$. If $0<c<1$, then we also have $\Omega_c(s)=|s|$ whenever $s<-1+ c/2$. Since $L(s)\geq|s|$ for all $s$, we have
\begin{equation}
\label{eq:OmegaMinusLPositiveSmallc}
L(s)-\Omega_c(s)\geq 0\text{ if }0<c<1\text{ and }\left|s-\frac c2\right|>1. 
\end{equation}
If $c\geq 1$ and $s\in(-1/(2c),-1+c/2)$, we have that $\Omega_c(s)-L(s)<0$, since $\Omega_c(s)=s+1/c$ for such $s$ and $L(s)<s+1/c$ when $s\geq -1/(2c)$. Thus, we have
\begin{equation}
\label{eq:OmegaMinusLPositiveLargec}
\begin{array}{c}
L(s)-\Omega_c(s)\geq 0\text{ if }c\geq 1\text{ and }s>1+\frac c2,
\\\text{and}\\
L(s)-\Omega_c(s)\leq 0\text{ if }c\geq 1\text{ and }s\in(-\frac 1{2c},-1+\frac c2).
\end{array}
\end{equation}
Since $L(s)-\Omega_c(s)=0$ for $s<- 1/(2c)$, in order to show \eqref{eq:thetaRho-norm} is nonnegative it is enough to show that $H'_c(s)$ also satisfies the inequalities \eqref{eq:OmegaMinusLPositiveSmallc} and \eqref{eq:OmegaMinusLPositiveLargec}. In the shifted notation we need to show that
\begin{equation}
\begin{array}{c}
\label{eq:HpcSigns}
\tilde{H}'_c(z)\geq 0\text{ when }z\in(1,\infty)\text{ or } 0<c<1\text{ and }z\in\left(-\frac {1+c^2}{2c},-1\right),
\\\text{and}\\
\tilde{H}'_c(z)\leq 0\text{ when }c\geq 1\text{ and }z\in\left(-\frac {1+c^2}{2c},-1\right).
\end{array}
\end{equation}
Differentiating $\tilde{H}_c(z)$ when $|z|>1$ we obtain
\begin{equation}
\label{eq:dH}
\tilde{H}'_c(z)=\arccosh|z|+\sign(1-c)\arccosh\left|
      \frac{1+\frac{1+c^2}{2c}z}{z+\frac{1+c^2}{2c}}\right|,
\end{equation}
which implies that
\begin{equation}
\label{eq:limdH1}
\lim_{z\rightarrow -1^-\text{ or }z\rightarrow 1^+} \tilde{H}'_c(z)=0.
\end{equation}
Differentiating \eqref{eq:dH} we obtain
\begin{equation}
\label{eq:ddH}
\tilde{H}''_c(z)=\frac{\sign(z)\(z+\frac 1c\)}{\(\frac{1+c^2}{2c}+z\)\sqrt{z^2-1}}.
\end{equation}
When $0<c<1$, we have that $\frac 1c>\frac{1+c^2}{2c}$, whence \eqref{eq:ddH} implies that $\sign(\tilde{H}''_c(z))=\sign(z)$ for $z\in(-\frac {1+c^2}{2c},-1)\cup(1,\infty)$. When $c\geq 1$, we have 
$\tilde{H}''_c(z)>0$ for $z\in(-\frac {1+c^2}{2c},-1)\cup(1,\infty)$. These, together with \eqref{eq:limdH1} imply \eqref{eq:HpcSigns}.

That the equality $\theta(L)-\rho(L)=0$ holds if and only if $L=\Omega_c$ follows immediately from the above arguments.

$\qed$
\end{proof}

\begin{remark}
Biane's theorem (Theorem \ref{thm:Biane}) is an immediate corollary of Proposition \ref{prop:measure} and Corollary \ref{cor:UniqueMinimizer}.
\end{remark}

\subsection{Proofs of the lemmas}
\label{sec:proofsOfLemmas}
In this section we list the lemmas used in the proof of Proposition \ref{prop:integral} and give their proofs. Before we move on, let us give two integrals which will be used throughout the proofs (both formulas can be easily obtained from \cite[2.266, p.97]{GradshteynRyzhik}):
\begin{equation}
\label{eq:GRzSmall}
	\int_{-1}^1{\frac 1{(\alpha-z)\sqrt{1-z^2}}}dz=
	\left\{
	\begin{array}{cr}
		\sign(\alpha)\frac \pi{\sqrt{\alpha^2-1}}
		,&|\alpha|>1
		\\0
		,&|\alpha|\leq1
	\end{array}\right.=\delta_{|\alpha|>1}Sign(\alpha)\frac \pi{\sqrt{\alpha^2-1}}
\end{equation}
and if $\alpha>1$,
\begin{equation}
\label{eq:GRzBig}
  \int{\frac 1{(z+\alpha)\sqrt{z^2-1}}}dz=
  \frac{\sign(z)}{\sqrt{\alpha^2-1}}\arccosh\left|\frac{1+\alpha z}{z+\alpha}\right|+\const.
\end{equation}
In particular, setting $\alpha=-\frac{1+c^2}{2c}, c>0$ in \eqref{eq:GRzSmall} we obtain
\begin{equation}
\label{eq:GRzSmallc}
\int_{-1}^1{\frac 1{\(\frac{1+c^2}{2c}+z\)\sqrt{1-z^2}}}dz=\frac{2c\pi}{|1-c^2|}.
\end{equation}

\begin{lemma}
\label{lem:F2-2}
Let $A(c)$ be the following integral:
\begin{equation*}
A(c):=\int_{a}^{b}{\ln(1+2cs)(|s|-\Omega_c(s))}ds.
\end{equation*}
For any $c>0$ we have
\begin{equation}
\label{eq:F2-2}
A(c)=-\frac{c^2}8+\delta_{c>1}\(\frac 12-\frac 5{8c^2}+\frac{c^2}8-\(1+\frac 1{2c^2}\)\ln(c)\).
\end{equation}
\end{lemma}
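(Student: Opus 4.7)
The plan is a direct computation exploiting the piecewise structure of $\Omega_c$. The integrand is supported on $[c/2-1,c/2+1]$ (where $\Omega_c(s)=h(c,s)$), with an additional contribution from $[-1/(2c),c/2-1]$ (where $\Omega_c(s)=s+1/c$) only when $c>1$. Natural variables are the shifted coordinates $z=s-c/2$ of \eqref{eq:shiftedNot}, because $1+2cs=2c(\tfrac{1+c^2}{2c}+z)$ so $\ln(1+2cs)=\ln(2c)+\ln(\tfrac{1+c^2}{2c}+z)$, and the factor $\tfrac{1+c^2}{2c}+z$ is exactly the denominator that makes \eqref{eq:GRzSmall}--\eqref{eq:GRzSmallc} applicable.

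For the main piece over $z\in[-1,1]$, I would integrate by parts with $u=\ln(1+2cs)$ and $dv=(|s|-\Omega_c(s))'\,ds$. Boundary terms vanish since $|s|-h(c,s)=0$ at $z=\pm1$, leaving
\[
-\int_{-1}^{1}\frac{2c\bigl(|z+c/2|-\tilde h(c,z)\bigr)}{1+c^2+2cz}\,dz.
\]
The $|z+c/2|$ piece is elementary (split at $z=-c/2$ when $c<2$). For the $\tilde h$ piece I would use the simple form $\Omega_c'(s)=\tfrac{2}{\pi}\arcsin\bigl(\tfrac{c+2s}{2\sqrt{1+2cs}}\bigr)$ and perform a second integration by parts; after differentiating the $\arcsin$, the integrand reduces to one of type $R(z)\big/\bigl[(\tfrac{1+c^2}{2c}+z)\sqrt{1-z^2}\bigr]$ with $R$ polynomial in $z$, $c$, which is evaluable in closed form by \eqref{eq:GRzSmall} and \eqref{eq:GRzSmallc}. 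These contributions combine to produce the $-c^2/8$ term.

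For $c>1$, the extra interval contributes an elementary piece: in shifted coordinates it is $z\in[-\tfrac{1+c^2}{2c},-1]$ and $|s|-\Omega_c(s)=-2\bigl(z+\tfrac{1+c^2}{2c}\bigr)$. The substitution $u=\tfrac{1+c^2}{2c}+z$ (whose upper limit is $(c-1)^2/(2c)$) reduces the piece to a linear combination of $\int u\,du$ and $\int u\ln u\,du$, evaluated in one step and producing the $\delta_{c>1}$ correction with its $\ln c$ and $c^{-2}$ terms.

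The main obstacle is bookkeeping: both integrations by parts in the $h(c,s)$-integral generate numerous boundary terms with $\arcsin$/$\arccos$ at the endpoints, and the factor $\sign(1-c)$ that appears in related quantities must be tracked as $c$ crosses $1$. I would sanity-check by verifying $A(0)=0$ (both sides vanish) and continuity of \eqref{eq:F2-2} at $c=1$, where the $c>1$ branch collapses to $-\tfrac18+\tfrac12-\tfrac58+\tfrac18=-\tfrac18$, agreeing with $-c^2/8$.
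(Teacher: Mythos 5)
Your overall strategy---pass to the shifted coordinates \eqref{eq:shiftedNot}, split according to the piecewise definition of $\Omega_c$, and integrate by parts twice---is the same in spirit as the paper's proof, but as written the plan has one structural gap and two incorrect steps. The structural gap is the decisive one. Each integration by parts of $\int\ln(1+c^2+2cz)\,g(z)\,dz$ differentiates one factor and antidifferentiates the other, so no sequence of integrations by parts can produce an integrand in which \emph{both} the logarithm has been differentiated (yielding $\tfrac{2c}{1+c^2+2cz}$) \emph{and} the $\arcsin$ in $\Omega_c'$ has been differentiated (yielding $\tfrac{1}{\sqrt{1-z^2}}$). After two integrations by parts one is left with $\int_{-1}^1\phi_2\bigl(\tfrac{1+c^2+2cz}{2}\bigr)\tilde\Omega_c''(z)\,dz$; only the log-free part of $\phi_2$ gives integrands of your claimed type $R(z)\big/\bigl[(\tfrac{1+c^2}{2c}+z)\sqrt{1-z^2}\bigr]$ amenable to \eqref{eq:GRzSmall}--\eqref{eq:GRzSmallc}. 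A log-weighted integral, namely \eqref{eq:int-ln2}, survives, and it is the real content of the lemma. The paper's device for it---absent from your plan---is to differentiate under the integral sign with respect to $c$ (three times, reducing \eqref{eq:int-ln2} to \eqref{eq:intLn} and then to an algebraic integral) and integrate back in $c$, fixing constants at $c=0$. Without this, or some other closed form for $\int_{-1}^1\ln(1+c^2+2cz)\tfrac{\mathrm{poly}(z)}{\sqrt{1-z^2}}\,dz$, your computation cannot close.

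The two incorrect steps: (i) the first "integration by parts" is not one. With $u=\ln(1+2cs)$ differentiated, the other factor must be \emph{anti}differentiated, so the surviving integral contains $V(z)=\int_{-1}^{z}\bigl(|w+\tfrac c2|-\tilde h(c,w)\bigr)\,dw$ rather than $|z+\tfrac c2|-\tilde h(c,z)$ itself, and the boundary term involves $V(1)=\int_{-1}^{1}(|s|-\Omega_c(s))\,ds$, which is $-\tfrac12$ for $c\le 1$, not $0$. The vanishing of $|s|-h(c,s)$ at the endpoints is the relevant condition for the \emph{opposite} split (antiderivative $\phi_1$ on the log, derivative on $|s|-\Omega_c$), which is what the paper uses---and even then $|s|-h(c,s)$ does \emph{not} vanish at $z=-1$ when $c>1$ (it equals $-(c-1)^2/c$ for $1<c<2$ and $-\tfrac1c$ for $c\ge2$), so your endpoint claim fails on the left; the paper sidesteps this by integrating over all of $[a,b]$ so that the linear piece is absorbed into the same integration by parts and the only extra contribution is the jump of $\sign(z+\tfrac c2)$. (ii) On the extra interval for $c>1$, the identity $|s|-\Omega_c(s)=-2\bigl(z+\tfrac{1+c^2}{2c}\bigr)$ holds only where $s<0$; for $c\ge2$ the interval contains $s\in[0,\tfrac c2-1]$, where $|s|-\Omega_c(s)=-\tfrac1c$ is constant. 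Items (i) and (ii) are repairable bookkeeping errors, but the missing differentiation-in-$c$ argument is a genuine hole.
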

\begin{proof}
Switching to the shifted notation \eqref{eq:shiftedNot} we obtain
\begin{equation}
\label{eq:lnOmega}
A(c)=\int_{a-\frac c2}^{b-\frac c2}{\ln(1+c^2+2cz)\(\left|z+\frac c2\right|-\tilde{\Omega}_c(z)\)}dz.
\end{equation}
Integrating \eqref{eq:lnOmega} by parts we obtain:
\begin{equation*}
A(c)=\int_{a-\frac c2}^{b-\frac c2}\frac 1c\phi_1\(\frac{1+c^2+2cz}2\)\(\sign\(z+\frac c2\)-\tilde{\Omega}'_c(z)\)dz.
\end{equation*}
Integrating by parts a second time, noting the discontinuity at $z=-c/2$ and noting that $\tilde{\Omega}_c''(z)=0$ for $|z|>1$, we obtain
\begin{equation*}
A(c)=-\frac 2{c^2}\phi_2\(\frac 12\)+\int_{-1}^{1}\frac 1{c^2}\phi_2\(\frac{1+c^2+2cz}2\)\tilde{\Omega}''_c(z)dz,
\end{equation*}
whence \eqref{eq:F2-2} is equivalent to
\begin{equation}
\label{eq:intphi2p''}
\int_{-1}^1{\phi_2\(\frac{1+c^2+2cz}2\)\tilde{\Omega}_c''(z)}dz=
\left\{\begin{array}{ll}
\frac 38-\frac{c^4}8,&0<c\leq 1\\
-\frac 14+\frac {c^2}2-\frac{\ln(c)}2-c^2\ln(c),&1\leq c
\end{array}\right..
\end{equation}
Plugging in $\phi_2(x)={3/4x^2} - 1/2 x^2 \ln(2|x|)$ and 
$$\tilde{\Omega}_c''(z)=\frac{2 (1 + c z)}{\pi (1 + c^2 + 2 c z) \sqrt{1 - z^2}}$$
splits \eqref{eq:intphi2p''} to two integrals. The first one is
\begin{align}
\label{eq:int-sin}
\frac 3{8\pi}\int_{-1}^1{\frac{(1+c^2+2cz)(1+cz)}{\sqrt{1-z^2}}}dz
&
=\frac 3{8\pi}\int_{-\pi/2}^{\pi/2}{1+c^2+(3c+c^2)\sin\theta+2c^2 \sin^2 \theta}d\theta
\\\nonumber&
=\frac 38 (1+2c^2).
\end{align}
Using this, \eqref{eq:intphi2p''} is equivalent to
\begin{multline}
\label{eq:int-ln2}
\frac 1{4\pi}\int_{-1}^1{\frac{\ln(1+c^2+2cz)(1+c^2+2cz)(1+cz)}{\sqrt{1-z^2}}}dz
\\
=\left\{\begin{array}{ll}
\frac 34 c^2+\frac{c^4}8,&0<c\leq 1\\
\frac 58+\frac {c^2}4+\frac{\ln(c)}2+c^2\ln(c),&1\leq c
\end{array}\right..
\end{multline}
Since both sides of this equation are differentiable in $c$ and the equation is obviously true when $c=0$, it is enough to show that the derivatives of both sides with respect to $c$ agree.

Differentiating the left-hand side of \eqref{eq:int-ln2} with respect to $c$ three times and simplifying reduces \eqref{eq:int-ln2} to
\begin{equation}
\label{eq:intLn}
\frac 1{4\pi}\int_{-1}^1{\frac{\ln(1+c^2+2cz)6z}{\sqrt{1-z^2}}}dz
=
\left\{\begin{array}{ll}
\frac 32 c,&0<c\leq 1\\
\frac 32 \frac 1c,&1\leq c
\end{array}\right..
\end{equation}
Differentiating the left-hand side of \eqref{eq:intLn} with respect to $c$ we obtain

\begin{align*}
\frac 3{2\pi}\int_{-1}^1&{\frac{2(c+z)z}{(1+c^2+2cz)\sqrt{1-z^2}}}dz
\\&\qquad\qquad=
\frac 3{2\pi}\int_{-1}^1{\frac 1{\sqrt{1-z^2}}\(\frac{-1+c^2}{2c^2}+\frac zc+\frac{1-c^4}{4c^3\(\frac{1+c^2}{2c}+z\)}\)}dz
\\&\qquad\qquad=
\left\{\begin{array}{ll}
\frac 32,&0<c\leq 1\\
-\frac 32 \frac 1{c^2},&1\leq c
\end{array}\right..
\end{align*}


This completes the proof of \eqref{eq:intphi2p''} and of the lemma.
$\qed$
\end{proof}

\begin{lemma}
\label{lem:I}
For any $c>0$ we have
\begin{equation}
\label{eq:I}
	I_c(s)=\phi_1(a-s)+\phi_1(b-s)+G_c(s)-H_c(s),
\end{equation}
where $G_c(s)$ is defined by
\begin{equation}
\label{eq:G}
G_c(s):=\frac 1c \phi_1\(\frac{1+2cs}2\)-\frac{1-c^2}{2c}.
\end{equation}
\end{lemma}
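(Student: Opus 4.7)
The plan is to compute $I_c(s) = -\int_a^b \ln|2(s-t)|\, \Omega_c'(t)\, dt$ directly by splitting the domain of integration according to the piecewise description of $\Omega_c'(t)$: the outer intervals where $\Omega_c'(t) = \pm 1$, and the central interval $t \in [c/2-1, c/2+1]$ where $\Omega_c'(t) = \frac{2}{\pi}\arcsin\!\left(\frac{c+2t}{2\sqrt{1+2ct}}\right)$. On the outer intervals the substitution $u = s - t$, combined with the identity $\int \phi_0 = \phi_1$ and the oddness of $\phi_1$, gives elementary antiderivatives. In the case $0<c\leq 1$ there are two such intervals (slopes $-1$ on $(a, c/2-1)$ and $+1$ on $(c/2+1, b)$); in the case $c>1$ there are three, but the extra endpoints $-\frac{1}{2c}$ produce cancelling $\phi_1(s+\frac{1}{2c})$ contributions. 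Both cases therefore collapse to the common boundary contribution $\phi_1(a-s)+\phi_1(b-s)+\phi_1(z+1)+\phi_1(z-1)$, with $z=s-c/2$.

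Next I would treat the central integral by changing variable to $w = t - c/2$ and integrating by parts, differentiating $\tilde{\Omega}_c'$ into $\tilde{\Omega}_c''$ and integrating $-\ln|2(z-w)|$. The boundary terms at $w=\pm1$ produce $\ln|2(z\mp1)|\,\tilde{\Omega}_c(\pm1)$, and a second integration by parts (moving the logarithm's derivative onto $\tilde{\Omega}_c''$) reduces the problem to evaluating
\[
K_c(z) := \int_{-1}^1 \ln|z-w|\, \tilde{\Omega}_c''(w)\, dw = \frac{2}{\pi}\int_{-1}^1 \frac{(1+cw)\ln|z-w|}{(1+c^2+2cw)\sqrt{1-w^2}}\, dw.
\]
The crucial algebraic step is the partial-fraction decomposition
\[
\frac{1+cw}{1+c^2+2cw} = \frac{1}{2} + \frac{1-c^2}{2(1+c^2+2cw)},
\]
which separates $K_c(z)$ into a Chebyshev part and a pole-containing part.

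To evaluate these two pieces I would differentiate in $z$, reducing each to an integral covered by \eqref{eq:GRzSmall}--\eqref{eq:GRzSmallc}, and then integrate back in $z$. The Chebyshev part (the constant $\tfrac12$) yields, after integration in $z$, an expression involving $\ln\!\left|\tfrac{1+2cs}{2}\right|$ which matches $\tfrac{1}{c}\phi_1\!\left(\tfrac{1+2cs}{2}\right)$ up to the affine constant $\tfrac{1-c^2}{2c}$ absorbed into the definition of $G_c(s)$. The rational part, after using \eqref{eq:GRzBig} and integrating once more, produces precisely the combination of $\arccosh$ terms defining $\tilde{H}_c(z)$; the factor $1/|1-c^2|$ from \eqref{eq:GRzSmallc} is what produces the $\sign(1-c)$ in the second $\arccosh$ term of $\tilde{H}_c$, naturally separating the $c<1$ and $c>1$ regimes. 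Together with the boundary $\tilde{\Omega}_c(\pm 1)\ln|2(z\mp 1)|$ terms (which combine with the outer contributions to reconstitute $\phi_1(z\pm 1)$ exactly), this yields $I_c(s) = \phi_1(a-s)+\phi_1(b-s)+G_c(s)-H_c(s)$.

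The main obstacle is bookkeeping rather than conceptual: tracking the constants of integration so that the affine term $-\tfrac{1-c^2}{2c}$ in the definition of $G_c$ appears with the correct sign, verifying that the $\sqrt{z^2-1}$ term in $\tilde{H}_c$ comes from the antiderivative of $\arccosh$ evaluated between the boundary terms, and confirming that the $c<1$ vs $c\geq 1$ case distinction in $\tilde{\Omega}_c(-1)$ matches the $\sign(1-c)$ factor in $\tilde{H}_c$ once all boundary contributions are combined. Checking a single value (e.g.\ $s \gg 1$, where $\tilde{H}_c' \to 0$ and the formula degenerates to an elementary statement about $\phi_1$) pins down any remaining additive constant.
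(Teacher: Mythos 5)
Your overall strategy is the one the paper follows: reduce $I_c$ to boundary terms plus $\int_{-1}^{1}\phi_1(z-w)\,\tilde{\Omega}_c''(w)\,dw$, use the partial-fraction identity for $\tilde{\Omega}_c''$, differentiate in $z$ until \eqref{eq:GRzSmall}, \eqref{eq:GRzSmallc} and \eqref{eq:GRzBig} apply, and integrate back. But two specific claims in your sketch are wrong. First, for $c>1$ the contributions of the extra endpoint $-\tfrac{1}{2c}$ do \emph{not} cancel: the interval $(a,-\tfrac{1}{2c})$ has slope $-1$ and contributes $\phi_1(a-s)+\phi_1(s+\tfrac{1}{2c})$, while $(-\tfrac{1}{2c},\tfrac{c}{2}-1)$ has slope $+1$ and contributes $-\phi_1(z+1)+\phi_1(s+\tfrac{1}{2c})$, so the two $\phi_1(s+\tfrac{1}{2c})$ terms \emph{add} to $2\phi_1(s+\tfrac{1}{2c})$ (this is exactly the paper's \eqref{eq:I1}), and the sign of $\phi_1(z+1)$ differs from the $c\leq 1$ case. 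Hence the outer contributions do not collapse to a common expression; the $2\phi_1(s+\tfrac{1}{2c})$ term only disappears at the very end, against a $\tfrac{2}{c}\phi_1\bigl(\tfrac{1+2cs}{2}\bigr)$ term and the $\ln(c)$ terms that emerge from the constants of integration of the central integral. If you assume the cancellation up front you cannot recover $G_c-H_c$ for $c>1$.

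Second, ``checking a single value pins down any remaining additive constant'' badly underestimates the real work. Integrating back twice in $z$ leaves \emph{two} unknown functions of $c$ (the paper's $F_1(c)$ and $F_2(c)$), and fixing each requires evaluating a nontrivial definite integral such as $\int_{-1}^{1}w^{-1}\tilde{\Omega}_c'(w)\,dw$ or $\tfrac{1}{\pi}\int_{-1}^{1}\ln(1+c^2+2cw)\tfrac{1+cw}{\sqrt{1-w^2}}\,dw$, which the paper itself can only do by differentiating in $c$ and integrating back; this is precisely where the $-2\delta_{c>1}\ln(c)\,z$ term and the $c<1$ versus $c>1$ dichotomy enter the answer. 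Moreover your proposed check at $s\gg 1$ rests on a false premise: by \eqref{eq:limdH1}, $\tilde{H}_c'(z)\to 0$ only as $z\to\pm 1$, whereas $\tilde{H}_c'(z)\sim\arccosh|z|\to\infty$ as $z\to\infty$, so nothing degenerates there. The plan is salvageable --- it is the paper's plan --- but these are the two places where the computation as written would go off the rails.
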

\begin{proof}
Integrating $I_c(s)$ by parts we obtain
\begin{equation*}
I_c(s)=-\Omega_c'(t)\phi_1(s-t)\big|_a^{-\frac 1{2c}}-\Omega_c'(t)\phi_1(s-t)\big|_{-\frac 1{2c}}^b+\int_a^b{\phi_1(s-t)\Omega_c''(t)}dt.
\end{equation*}
The first two parts are equal to
\begin{equation}
\label{eq:I1}
\phi_1(a-s)+\phi_1(b-s)+
\left\{\begin{array}{cl}
0,& c<1
\\\phi_1(\frac 1{2c}+s),& c=1
\\2\phi_1(\frac 1{2c}+s),& c>1
\end{array}
\right.
\end{equation}
and the last part in the shifted notation becomes
\begin{equation*}
	\int_{-1}^1{\phi_1(z-w)\tilde{\Omega}_c''(w)}dw,
\end{equation*}
where the integration limits are taken to be $\pm 1$ since $\tilde{\Omega}_c''(w)=0$ outside the interval $[-1,1]$.





Differentiating twice with respect to $z$, decomposing into partial fractions and using \eqref{eq:GRzSmall} and \eqref{eq:GRzSmallc} we obtain


\begin{multline}
\label{eq:ddGH}
\frac {d^2}{dz^2}\(\int_{-1}^1{\phi_1(z-w)\tilde{\Omega}_c''(w)}dw\)
=\frac{-1}{\frac{1+c^2}{2c}+z}\(\delta_{|z|>1}\sign(z)\frac {\frac 1c+z}{\sqrt{z^2-1}}+Sign(1-c)\)
\\=-\sign(1-c)\frac{2c}{1+c^2+2cz}-\delta_{|z|>1}\sign(z)\(\frac 1{\sqrt{z^2-1}}+\frac {\frac 1c -\frac{1+c^2}{2c}}{\(z+\frac{1+c^2}{2c}\)\sqrt{z^2-1}}\).
\end{multline}
Using \eqref{eq:GRzBig} with $\alpha=\frac{1+c^2}{2c}>1$ we can integrate \eqref{eq:ddGH} to obtain
\begin{multline}
\label{eq:dGH}
\int_{-1}^1{\phi_0(z-w)\tilde{\Omega}_c''(w)}dw=\frac {d}{dz}\(\int_{-1}^1{\phi_1(z-w)\tilde{\Omega}_c''(w)}dw\)
\\=-\sign(1-c)\ln(1+c^2+2cz)
-\delta_{|z|>1}\sign(z)\ln|z+\sqrt{z^2-1}|
\\-\delta_{|z|>1}
    \sign(1-c)\arccosh\left|
      \frac{1+\frac{1+c^2}{2c}z}{z+\frac{1+c^2}{2c}}\right|+F_1(c),
\end{multline}
where $F_1(c)$ is a certain function that depends only on $c$. Next, we find $F_1(c)$. Since $F_1(c)$ is independent of $z$, $z$ can be fixed. Seting $z=0$ in \eqref{eq:dGH} and integrating by parts, we obtain
\begin{equation}
\label{eq:F1}
  \int_{-1}^1{\frac 1w\tilde{\Omega}_c'(w)}dw=F_1(c)+
\left\{\begin{array}{ll}
2\ln(2)-\ln(1+c^2),&0\leq c\leq 1\\
\ln(1+c^2),&c\geq 1
\end{array}\right..
\end{equation}

For $c\neq 1$ differentiating both sides with respect to $c$ we obtain
$$
  \frac 2{\pi} \int_{-1}^1{\frac 1w\frac{\sqrt{1-w^2}}{1+c^2+2cw}}dw=-\sign(1-c)\frac{2c}{1+c^2}+F'_1(c).
$$

Since 
$$
  \frac 1{w(1+c^2+2cw)}=\frac {2c}{1+c^2}\(\frac 1{2cw} - \frac 1{1+c^2+2cw}\)
$$
and ${\sqrt{1-w^2}}/w$ is an odd function, it follows that
\begin{equation}
\label{eq:F1p}
\frac 2{\pi} \int_{-1}^1{\frac{\sqrt{1-w^2}}{1+c^2+2cw}}dw=\sign(1-c)-\frac{1+c^2}{2c}F'_1(c).
\end{equation}





Calculating the left-hand side using \cite[2.267, p.97]{GradshteynRyzhik} and \eqref{eq:GRzSmallc} we obtain
\begin{equation*}
\frac 1{c\pi} \int_{-1}^1{\frac{\sqrt{1-w^2}}{\frac{1+c^2}{2c}+w}}dw
=\left\{\begin{array}{ll}
1,&0<c< 1\\
\frac 1{c^2},&c> 1
\end{array}\right..
\end{equation*}


From this calculation and \eqref{eq:F1p} it follows that
$$
F'_1(c)=\left\{\begin{array}{ll}
0,&0<c< 1\\
-\frac 2c,&c> 1
\end{array}\right.,
$$
hence, using that $F_1(c)$ is continuous, we obtain
\begin{equation}
\label{eq:F1const}
F_1(c)=\const+\left\{\begin{array}{ll}
0,&0<c\leq 1\\
-2\ln(c),&c\geq 1
\end{array}\right..
\end{equation}
Setting $c=0$ in \eqref{eq:F1} we obtain that the constant in \eqref{eq:F1const} is $0$.

Integrating \eqref{eq:dGH} with respect to $z$, using integration by parts for the last piece and using \eqref{eq:GRzBig} we obtain
\begin{multline}
\label{eq:GH}
  \int_{-1}^1{\phi_1(z-w)\tilde{\Omega}_c''(w)}dw=
  \sign(1-c)\frac 1c \phi_1\(\frac{1+c^2+2cz}{2}\)-\frac{1-c^2}{2c}
  \\-\delta_{|z|>1}\sign(z)(z\ln|z+\sqrt{z^2-1}|-\sqrt{z^2-1})
  -\delta_{|z|>1}\sign(1-c)z\arccosh\left|
      \frac{1+\frac{1+c^2}{2c}z}{z+\frac{1+c^2}{2c}}\right|
  \\+\delta_{|z|>1}\sign(z)\frac{1-c^2}{2c}\int{\frac{z}{\(z+\frac{1+c^2}{2c}\)\sqrt{z^2-1}}}dz
  -2\delta_{c>1}\ln(c)z+F_2(c).
\end{multline}
Calculating the remaining integral using \eqref{eq:GRzBig} and noting that
$$
  \arccosh|z|=\sign(z)\ln|z+\sqrt{z^2-1}|,
$$
we obtain
\begin{multline}
\label{eq:ph1Omegapp}
\int_{-1}^1{\phi_1(z-w)\tilde{\Omega}_c''(w)}dw
\\
=\tilde{G}_c(z)-
\left\{\begin{array}{cl}
0,& c<1
\\\frac 1c \phi_1(\frac {1+c^2+2cz}2),& c=1
\\\frac 2c \phi_1(\frac {1+c^2+2cz}2),& c>1
\end{array}
\right.
-\tilde{H}_c(z)-2\delta_{c>1}\ln(c)z+F_2(c).
\end{multline}
It remains to find $F_2(c)$. Since $F_2(c)$ is independent of $z$, it suffices to consider the limit ${z\rightarrow -\frac{1+c^2}{2c}}$:
\begin{multline*}
  \int_{-1}^1{\phi_1\(-\frac{1+c^2}{2c}-w\)\tilde{\Omega}_c''(w)}dw\\=\lim_{z\rightarrow -\frac{1+c^2}{2c}}\(\tilde{G}_c(z)-
\left\{\begin{array}{cl}
0,& c<1
\\\frac 1c \phi_1(\frac {1+c^2+2cz}2),& c=1
\\\frac 2c \phi_1(\frac {1+c^2+2cz}2),& c>1
\end{array}
\right.
-\tilde{H}_c(z)-2\delta_{c>1}\ln(c)z+F_2(c)\).
\end{multline*}
Substituting $x-x\ln|2x|$ for $\phi_1(x)$ we obtain
\begin{multline*}
-\frac 1{c\pi}\int_{-1}^1{(1+\ln(c)-\ln(1+c^2+2cw))\frac{1+cw}{\sqrt{1-w^2}}}dw
\\=F_2(c)+\left\{\begin{array}{ll}
\frac{-1+c^2-\ln(c)}{c},&0<c\leq 1\\
\frac{2+c^2}{c}\ln(c),&c\geq 1
\end{array}\right.
\end{multline*}
or, equivalently,
\begin{equation}
\label{eq:F2c}
  \frac 1{\pi}\int_{-1}^1{\ln(1+c^2+2cw)\frac{1+cw}{\sqrt{1-w^2}}}dw=F_2(c)c+\left\{\begin{array}{ll}
c^2,&0\leq c\leq 1\\
1+(3+c^2)\ln(c),&c\geq 1
\end{array}\right..
\end{equation}
Differentiating the left-hand side with respect to $c$ we obtain
\begin{equation}
\label{eq:intLn2}
\frac 1{\pi}\int_{-1}^1{\frac{\ln(1+c^2+2cw)w}{\sqrt{1-w^2}}}dw
  +\frac 1{\pi}\int_{-1}^1{\frac{2(c+w)(1+cw)}{(1+c^2+2cw)\sqrt{1-w^2}}}dw.
\end{equation}





It follows from \eqref{eq:intLn} that the first part of \eqref{eq:intLn2} is equal to $c$ if $0<c\leq 1$ and $1/c$ if $c>1$. The second part is equal to
\begin{equation*}
\frac 1{\pi}\int_{-1}^1{\frac 1{\sqrt{1-w^2}}\(\frac{1+c^2}{2c}+w+\frac{-1+2c^2-c^4}{4c^2}\frac 1{\frac{1+c^2}{2c}+w}\)}dw
=\left\{\begin{array}{ll}
c,&0<c\leq 1\\
\frac{1}{c},&c\geq 1
\end{array}\right..
\end{equation*}


Adding these and integrating with respect to $c$ we obtain the left-hand side of \eqref{eq:F2c} up to a constant. Setting $c=0$ in \eqref{eq:F2c} the constant is easily found, giving
\begin{equation}
\label{eq:F2}
F_2(c)=-\delta_{c>1}\frac{1+c^2}{c}\ln(c).
\end{equation}

Combining \eqref{eq:I1}, \eqref{eq:ph1Omegapp} and \eqref{eq:F2} we obtain
\begin{multline*}
I_c(s)=\phi_1(b-s)+\phi_1(a-s)+G_c(s)-
\left\{\begin{array}{cl}
0,& c<1
\\\frac 1c \phi_1(\frac {1+2cs}2),& c=1
\\\frac 2c \phi_1(\frac {1+2cs}2),& c>1
\end{array}
\right.
-H_c(s)\\+\left\{\begin{array}{cl}
0,& c<1
\\\phi_1(\frac 1{2c}+s),& c=1
\\2\phi_1(\frac 1{2c}+s),& c>1
\end{array}
\right.
+\delta_{c\geq1}\(-2\left(s-\frac c2\right)\ln(c)-\frac{1+c^2}{c}\ln(c)\),
\end{multline*}
which simplifies to \eqref{eq:I}.
$\qed$
\end{proof}

\begin{lemma}
\label{lem:F3}
For any $c>0$ we have
\begin{multline}
\label{eq:F3}
\int_{-1}^1{\phi_2(x-z)\tilde{\Omega}_c''(z)}dz=\sign(1-c)\frac 1{c^2} \phi_2\(\frac{1+c^2+2cx}2\)
-\frac{1-c^2}{2c}x\\-\tilde{J}_c(x)+\frac{-3+4c^2+3c^4}{16c^2}-\delta_{c>1}\(\(x+\frac{1+c^2}{2c}\)^2\ln(c)\),
\end{multline}
where $\tilde{J}_c(z)=0$ if $|z|\leq 1$ and
\begin{multline}
	\tilde{J}_c(z)=\frac 12 \(1-\frac 1{2c^2}+\(z+\frac {c^2-1}{2c}\)^2\)\arccosh|z|
	+\sign(z)\frac{1 - c^2 - 3 c z}{4 c}\\\times\sqrt{z^2-1}
	+\sign(1-c)\frac 12 \(z+\frac {c^2+1}{2c}\)^2 \arccosh\left|
      \frac{1+\frac{1+c^2}{2c}z}{z+\frac{1+c^2}{2c}}\right|,
\end{multline}
if $|z|>1$.
\end{lemma}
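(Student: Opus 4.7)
The plan is to reduce to Lemma \ref{lem:I} by antidifferentiating in the free variable $x$. Setting $K(x):=\int_{-1}^1 \phi_2(x-z)\tilde{\Omega}_c''(z)\,dz$ and using $\phi_2'=\phi_1$, differentiation under the integral sign gives $K'(x)=\int_{-1}^1 \phi_1(x-z)\tilde{\Omega}_c''(z)\,dz$, which is exactly the integral evaluated in \eqref{eq:ph1Omegapp}. I would therefore antidifferentiate the right-hand side of \eqref{eq:ph1Omegapp} termwise in $x$ and then pin down the remaining constant of integration. Across all three cases of $c$, the $\tilde G_c$-summand minus the case-distinction term in \eqref{eq:ph1Omegapp} reduces to $\sign(1-c)\,\tfrac{1}{c}\phi_1\!\bigl(\tfrac{1+c^2+2cx}{2}\bigr)-\tfrac{1-c^2}{2c}$, which via the substitution $u=(1+c^2+2cx)/2$ antidifferentiates to $\sign(1-c)\,c^{-2}\phi_2\!\bigl(\tfrac{1+c^2+2cx}{2}\bigr)-\tfrac{1-c^2}{2c}x$, supplying the first two summands of \eqref{eq:F3}. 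The linear piece $-2\delta_{c>1}\ln(c)\,x$ combined with $F_2(c)=-\delta_{c>1}\tfrac{1+c^2}{c}\ln c$ from \eqref{eq:F2} antidifferentiates to $-\delta_{c>1}\bigl(x+\tfrac{1+c^2}{2c}\bigr)^2\ln c$, modulo an $x$-independent shift absorbed into the final additive constant.

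The principal technical step is to produce $\tilde J_c(x)$ as an antiderivative of $\tilde H_c(x)$ on $|x|>1$. I would split $\tilde H_c$ into its three summands and integrate each by parts. For $(x-\tfrac{1-c^2}{2c})\arccosh|x|$, apply $\tfrac{d}{dx}\arccosh|x|=\sign(x)/\sqrt{x^2-1}$ and reduce the residual integral to the elementary antiderivatives of $z^k/\sqrt{z^2-1}$, producing a combination of $\arccosh|x|\cdot\text{polynomial}$ and $\sqrt{x^2-1}\cdot\text{polynomial}$. For the second summand $\sign(1-c)(x+\alpha)\arccosh\bigl|\tfrac{1+\alpha x}{x+\alpha}\bigr|$ with $\alpha=\tfrac{1+c^2}{2c}$, the derivative of the arccosh factor is obtained from \eqref{eq:GRzBig}; after partial fractions in $z+\alpha$, the leftover integrand breaks into rational-plus-$\sqrt{z^2-1}$ pieces again handled by \eqref{eq:GRzBig} and the standard $\int z^k/\sqrt{z^2-1}\,dz$. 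For $-\sign(x)\sqrt{x^2-1}$ integrate directly. Consolidating the logarithmic contributions via $\arccosh|z|=\sign(z)\ln|z+\sqrt{z^2-1}|$, the three pieces add to precisely $\tilde J_c(x)$, giving the $-\tilde J_c(x)$ summand in \eqref{eq:F3}.

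Finally, I would pin down the overall constant of integration by evaluating both sides at a convenient $x$, in the same spirit as the determinations of $F_1(c)$ and $F_2(c)$ in the proof of Lemma \ref{lem:I}. A natural choice is $x=-\alpha$, where $\phi_2\!\bigl(\tfrac{1+c^2+2cx}{2}\bigr)=\phi_2(0)=0$ and $(x+\alpha)^2\ln c$ vanishes, while $\tilde J_c(-\alpha)$ reduces to a closed-form expression in $\arccosh(\alpha)$ and $\sqrt{\alpha^2-1}=|1-c^2|/(2c)$ (the term in $\tilde J_c$ proportional to $(z+\alpha)^2$ contributes $0\cdot\infty\to 0$ at $z=-\alpha$). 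Matching this to the explicit evaluation of $K(-\alpha)=\int_{-1}^1 \phi_2(\alpha+z)\tilde{\Omega}_c''(z)\,dz$, which via $\tilde{\Omega}_c''(z)=\tfrac{1+cz}{\pi c(z+\alpha)\sqrt{1-z^2}}$ and $1+c^2+2cz=2c(z+\alpha)$ reduces to trigonometric integrals treatable by \eqref{eq:GRzSmall}, \eqref{eq:GRzBig} and \cite[2.266--2.267]{GradshteynRyzhik}, should produce the asserted constant $\tfrac{-3+4c^2+3c^4}{16c^2}$. The main obstacle is the lengthy and sign-sensitive arccosh integration by parts, aggravated by the need to treat $0<c\leq 1$ and $c\geq 1$ separately (because of the $\sign(1-c)$ and $\delta_{c>1}$ factors) and to verify continuity at the transitional case $c=1$.
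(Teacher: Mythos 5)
Your proposal follows essentially the same route as the paper's proof: differentiate $K(x)$ to reduce to \eqref{eq:ph1Omegapp}, antidifferentiate termwise (with $\tilde J_c$ serving as the continuous antiderivative of $\tilde H_c$ and the $\delta_{c>1}$ pieces completing the square to $-(x+\tfrac{1+c^2}{2c})^2\ln c$), and fix the constant by letting $x\to-\tfrac{1+c^2}{2c}$, where the left-hand side reduces to the integrals already evaluated in \eqref{eq:int-sin} and \eqref{eq:int-ln2}. The only cosmetic differences are that you derive $\tilde J_c$ by integrating $\tilde H_c$ rather than verifying $\tilde J_c'=\tilde H_c$ by differentiation, and you recompute the constant-fixing integrals from scratch instead of citing Lemma \ref{lem:F2-2}.
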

\begin{proof}
Differentiating the left-hand side of \eqref{eq:F3} and using \eqref{eq:ph1Omegapp} and \eqref{eq:F2} we obtain
\begin{multline*}
\frac d{dx}\int_{-1}^1{\phi_2(x-z)\tilde{\Omega}_c''(z)}dz
=\int_{-1}^1{\phi_1(x-z)\tilde{\Omega}_c''(z)}dz
\\=\tilde{G}_c(x)-
\left\{\begin{array}{cl}
0,&c<1
\\\frac 1c \phi_1\(\frac{1+c^2+2cx}2\),&c=1
\\2\frac 1c \phi_1\(\frac{1+c^2+2cx}2\),&c>1
\end{array}\right.
-\tilde{H}_c(x)-\delta_{c>1}\(2\ln(c)x+\frac{1+c^2}{c}\ln(c)\),
\end{multline*}
whence
\begin{multline*}
  \int_{-1}^1{\phi_2(x-z)\tilde{\Omega}_c''(z)}dz=\sign(1-c)\frac 1{c^2} \phi_2\(\frac{1+c^2+2cx}2\)
  -\frac{1-c^2}{2c}x\\-\int{\tilde{H}_c(x)}dx-\delta_{c>1}\(\ln(c)x^2+\frac{1+c^2}{c}\ln(c)x\).
\end{multline*}
Since $\frac d{dx}\tilde{J}_c(x)=\tilde{H}_c(x)$ for $|x|>1$, we obtain
\begin{multline*}
  \int_{-1}^1{\phi_2(x-z)\tilde{\Omega}_c''(z)}dz=\sign(1-c)\frac 1{c^2} \phi_2\(\frac{1+c^2+2cx}2\)
  -\frac{1-c^2}{2c}x-\tilde{J}_c(x)\\-\delta_{c>1}\(x^2\ln(c)+\frac{1+c^2}{c}x\ln(c)\)+F_4(c).
\end{multline*}
To find $F_4(c)$, find the limit of both sides when $x\rightarrow -\frac{1+c^2}{2c}$. The left-hand side becomes
\begin{multline*}
  \int_{-1}^1{\phi_2\(\frac{1+c^2+2cz}{2c}\)\tilde{\Omega}_c''(z)}dz
  =\frac 1{8c^2\pi}\int_{-1}^1{(3+2\ln(c))\frac{(1+cz)(1+c^2+2cz)}{\sqrt{1-z^2}}}dz
  \\-\frac 1{8c^2\pi}\int_{-1}^1{2\ln(1+c^2+2cz)\frac{(1+cz)(1+c^2+2cz)}{\sqrt{1-z^2}}}dz.
\end{multline*}
The first integral is given in \eqref{eq:int-sin} and the second in \eqref{eq:int-ln2}, thus the left-hand side is





\begin{equation*}
\left\{\begin{array}{ll}
\frac{3-c^4+(2+4c^2)\ln(c)}{8c^2},&0<c\leq1\\
-\frac{1-2c^2+\ln(c)+2c^2\ln(c)}{4c^2},&1\leq c
\end{array}\right..
\end{equation*}


The limit of the right-hand side is
$$
F_4(c)+
\left\{\begin{array}{ll}
\frac{9-4c^2-5c^4+(4+8c^2)\ln(c)}{16c^2},&0<c\leq1\\
\frac{-1+4c^2-3c^4+4c^4\ln(c)}{16c^2},&1\leq c
\end{array}\right.,
$$
whence
$$
  F_4(c)=\frac{-3+4c^2+3c^4}{16c^2}-\delta_{c>1}\(\frac{1+c^2}{2c}\)^2\ln(c).
$$
$\qed$
\end{proof}

\begin{lemma}
\label{lem:intIOmega'}
For any $c>0$ we have
\begin{multline}
\label{eq:intIOmega'}
\int_a^b{I_c(s)\Omega_c'(s)}ds=1-\frac{c^2}4-2\phi_2(b-a)+2\int_a^b{G_c(s)\Omega_c'(s)}ds\\-2\int_a^b{H_c(s)\Omega_c'(s)}ds+
\delta_{c>1}\(1-\frac 5{4c^2}+\frac{c^2}4-\(2+\frac 1{c^2}\)\ln(c)\).
\end{multline}
\end{lemma}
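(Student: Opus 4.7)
My plan: first apply Lemma \ref{lem:I} to substitute $I_c(s) = \phi_1(a-s) + \phi_1(b-s) + G_c(s) - H_c(s)$, which gives
\begin{equation*}
\int_a^b I_c(s)\Omega_c'(s)\,ds = \int_a^b[\phi_1(a-s)+\phi_1(b-s)]\Omega_c'(s)\,ds + \int_a^b G_c\Omega_c'\,ds - \int_a^b H_c\Omega_c'\,ds.
\end{equation*}
One copy of $\int G_c\Omega_c' - \int H_c\Omega_c'$ is already visible; it remains to show that the boundary $\phi_1$-integral produces a second copy together with the constant $1 - c^2/4 - 2\phi_2(b-a)$ and the $\delta_{c>1}$ correction.

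Second, I integrate the boundary $\phi_1$-integral by parts with $u = \Omega_c'(s)$ and antiderivative $v = -\phi_2(a-s)$ (respectively $v = -\phi_2(b-s)$), using $\phi_2' = \phi_1$. For $0 < c < 1$ the boundary evaluations collapse to $-2\phi_2(b-a)$ (since $\Omega_c'(a) = -1$, $\Omega_c'(b) = 1$, $\phi_2(0)=0$), leaving
\begin{equation*}
\int [\phi_2(a-s) + \phi_2(b-s)]\Omega_c''(s)\,ds.
\end{equation*}
For $c \ge 1$ the unit jump of $\Omega_c'$ at $s = -1/(2c)$ contributes extra $\phi_2(\cdot + 1/(2c))$-type terms that must be carried separately.

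Third, apply Lemma \ref{lem:F3} with $x = a - c/2$ and $x = b - c/2$ to evaluate the two $\phi_2$-integrals explicitly; their sum produces endpoint evaluations of $\phi_2((1+2cs)/2)/c^2$ and of $\tilde J_c$ (which was constructed in Lemma \ref{lem:F3} as an antiderivative of $\tilde H_c$), plus the constant $(-3+4c^2+3c^4)/(16c^2)$ from each endpoint and an explicit linear-in-$(a+b)$ piece. To extract the second copy of $\int G_c\Omega_c' - \int H_c\Omega_c'$, I integrate by parts in the reverse direction on the surviving $\int G_c\Omega_c'$ and $\int H_c\Omega_c'$: by the defining formula \eqref{eq:G}, an antiderivative of $G_c$ is $\phi_2((1+2cs)/2)/c^2 - (1-c^2)s/(2c)$, and $J_c$ is the antiderivative of $H_c$ by construction. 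The endpoint evaluations of these antiderivatives coincide exactly with those coming out of Lemma \ref{lem:F3}, so the $a,b$-dependent pieces cancel and the remaining arithmetic of explicit constants yields $1 - c^2/4$ for $0 < c \le 1$.

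Main obstacle: the $c \ge 1$ case. Lemma \ref{lem:I} already carries $\delta_{c\ge 1}$ corrections involving $\phi_1(s+1/(2c))$; the jump of $\Omega_c'$ at $-1/(2c)$ introduces additional $\phi_2(\cdot + 1/(2c))$ contributions after the first integration by parts; and Lemma \ref{lem:F3} brings in $\delta_{c>1}(2\ln(c)\,x + \frac{1+c^2}{c}\ln(c))$-type terms. Verifying that these three sources assemble precisely into the advertised $\delta_{c>1}(1 - \frac 5{4c^2} + \frac{c^2}4 - (2+\frac 1{c^2})\ln(c))$ is the most delicate bookkeeping, though all the pieces involved are explicit.
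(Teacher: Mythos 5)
Your plan is correct and rests on the same two inputs as the paper (Lemma \ref{lem:I} and Lemma \ref{lem:F3}), but it is organized differently. The paper first differentiates \eqref{eq:intIOmega'} with respect to $b$: since $\tfrac{d}{db}\int_a^b I_c(s)\Omega_c'(s)\,ds=I_c(b)\Omega_c'(b)+\int_a^b\phi_0(b-s)\Omega_c'(s)\,ds=2I_c(b)$ and $\Omega_c'(b)=1$, Lemma \ref{lem:I} applied at the single point $s=b$ shows the two sides have equal $b$-derivatives (and, by symmetry, equal $a$-derivatives), so their difference is a function $F_3(c)$ alone; it then computes $F_3(c)$ by specializing to $a=-1/(2c)$, $b=1+c/2$ and invoking \eqref{eq:intphi2p''} and Lemma \ref{lem:F3}. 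You instead substitute Lemma \ref{lem:I} for general $a,b$ and track all $a,b$-dependent terms directly through two rounds of integration by parts. This works, but it forces you to verify cancellations that the derivative trick dispatches for free; the one you should watch is that Lemma \ref{lem:F3} carries $\sign(1-c)\tfrac{1}{c^2}\phi_2\bigl(\tfrac{1+c^2+2cx}{2}\bigr)$ while the plain antiderivative of $G_c$ has no $\sign(1-c)$, so for $c>1$ these do \emph{not} cancel directly: the leftover $-\tfrac{2}{c^2}\phi_2\bigl(\tfrac{1+2cb}{2}\bigr)$ is still $b$-dependent and only disappears after combining the identity $\tfrac{1}{c^2}\phi_2(cx)=\phi_2(x)-\tfrac12 x^2\ln c$ with the $2\phi_2(b+\tfrac1{2c})$ term produced by the unit jump of $\Omega_c'$ at $s=-1/(2c)$ and with the $\delta_{c>1}\bigl(x+\tfrac{1+c^2}{2c}\bigr)^2\ln(c)$ term of Lemma \ref{lem:F3}. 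All the pieces are present in your outline and the cancellation does close, so the approach is sound; it is simply heavier bookkeeping than the paper's, whose only real computation is the evaluation of $F_3(c)$ at one well-chosen pair of endpoints.
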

\begin{proof}
Differentiating both sides of \eqref{eq:intIOmega'} with respect to $b$ and noting that $\phi_1$ is odd since it is the integral of an even function, and that $\Omega_c'(b)=1$ since $b>1+c/2$, we obtain
\begin{align*}
	\frac{d}{db}(\text{left-hand side}) &=I_c(b)\Omega_c'(b)+\int_a^b{\phi_0(b-s)\Omega_c'(s)}ds=I_c(b)+I_c(b)\\&=2(\phi_1(0)+\phi_1(a-b)+
	G_c(b)-H_c(b))\\&=-2\phi_1(b-a)+2G_c(b)\Omega_c'(b)-2H_c(b)\Omega_c'(b)
	\\&=\frac{d}{db}(\text{right-hand side}).
\end{align*}
This implies that 
\begin{equation}
\label{eq:intIOmega'F}
\int_a^b{I_c(s)\Omega_c'(s)}ds=1-\frac{c^2}4-2\phi_2(b-a)+2\int_a^b{G_c(s)\Omega_c'(s)}ds-2\int_a^b{H_c(s)\Omega_c'(s)}ds+F_3(c)
\end{equation}
for some function $F_3(c)$ which will be found next. Based on the above argument $F_3(c)$ might depend on $a$, but by symmetry between $a$ and $b$ it does not. Since $F_3(c)$ is independent of $a$ and $b$, $a$ and $b$ can be fixed. Set $a=-{1}/(2c)$ and $b=1+ c/2$, switch to the shifted notation and collect all the integrals in \eqref{eq:intIOmega'F} on the left side:
\begin{multline*}
\int_{-\frac{1+c^2}{2c}}^1 \(\phi_1(1-z)+\phi_1\(-\frac{1+c^2}{2c}-z\)-\tilde{G}_c(z)+\tilde{H}_c(z)\)\tilde{\Omega}_c'(z)dz
\\=1-\frac{c^2}4-2\phi_2\(1+\frac{1+c^2}{2c}\)+F_3(c).
\end{multline*}
Substituting the formula for $\tilde{G}_c(z)$ and integrating by parts we obtain
\begin{align*}
\int_{-\frac{1+c^2}{2c}}^1{\frac{1-c^2}{2c}\tilde{\Omega}'_c(z)}dz
&-\int_{-\frac{1+c^2}{2c}}^1\(-\phi_2(1-z)-\phi_2\(-\frac{1+c^2}{2c}-z\)
\right.\\&\left.
\qquad\qquad\qquad\quad-\frac 1{c^2}\phi_2\(\frac{1+c^2+2cz}{2}\)+\tilde{J}_c(z)\)\tilde{\Omega}''_c(z)dz
\\&-\sign(c-1)\tilde{J}_c\(-\frac{1+c^2}{2c}\)+\sign(c-1)\phi_2\(\frac{1+c^2}{2c}+1\)
\\&-\phi_2\(\frac{1+c^2}{2c}+1\)-\frac 1{c^2}\phi_2\(\frac{1+c^2+2c}{2}\)
\\&=1-\frac{c^2}4-2\phi_2\(1+\frac{1+c^2}{2c}\)+F_3(c),
\end{align*}
which, after simplifications using $\tilde{\Omega}''_c(z)=0$ for $|z|>1$, $\tilde{J}_c(z)=0$ for $|z|\leq 1$,
$$
\int_{-\frac{1+c^2}{2c}}^1{\frac{1-c^2}{2c}\tilde{\Omega}'_c(z)}dz
=\frac{1-c^2}{2c}\(1+\frac c2-\frac 1{2c}\)
$$
and
$$
\sign(c-1)\tilde{J}_c\(-\frac{1+c^2}{2c}\)=\frac 12 \(1+\frac 1{2c^2}\)\ln(c)+\frac{5+c^2}{8c}\frac{1-c^2}{2c},
$$
becomes
\begin{multline*}
\int_{-1}^1\(\phi_2(1-z)+\phi_2\(\frac{1+c^2}{2c}+z\)+\frac 1{c^2}\phi_2\(\frac{1+c^2+2cz}{2}\)\)\tilde{\Omega}''_c(z)dz
\\=\frac 1{c^2}\phi_2\(\frac{1+c^2+2c}{2}\)+\frac 12 \(1 + \frac 1{2 c^2}\)\ln(c)
+\frac{9 - 8 c + 4 c^2 + 8 c^3 - c^4}{16 c^2}
\\-\left\{\begin{array}{cl}
0&c<1
\\\phi_2\(\frac{1+c^2+2c}{2c}\)&c=1
\\2\phi_2\(\frac{1+c^2+2c}{2c}\)&c>1
\end{array}\right.
+F_3(c).
\end{multline*}

The remaining integrals are given by \eqref{eq:intphi2p''} and by Lemma \ref{lem:F3} with $x=-\frac{1+c^2}{2c}$ and $x=1$. Calculating those integrals and simplifying we obtain
$$F_3(c)=
\left\{\begin{array}{ll}
0,&0<c\leq 1\\
1-\frac 5{4c^2}+\frac{c^2}4-\(2+\frac 1{c^2}\)\ln(c),&1\leq c
\end{array}\right..$$
$\qed$
\end{proof}

\section{Proof of the Main Theorems}
\label{sec:MainProofs}
\textit{The upper bound.} The number of Young diagrams with $n$ cells and at most $N$ rows is less than the number $p(n)$ of Young diagrams with $n$ cells. By the Hardy-Ramanujan formula \cite{HardyRamanujan},\cite[p. 116]{Hardy} $p(n)$ is asymptotically given by $p(n)\approx\frac{1}{4n\sqrt{3}}e^{\frac{2\pi}{\sqrt{6}}\sqrt{n}}$. Hence,
$$
\mathbb{P}_N^n\left\{\lambda: \mathbb{P}_N^n(\lambda)<e^{-\frac{2\pi}{\sqrt{6}}\sqrt{n}}\right\}\leq p(n)e^{-\frac{2\pi}{\sqrt{6}}\sqrt{n}}\xrightarrow{n\rightarrow \infty}0.
$$
This implies that
$$\lim_{n\rightarrow \infty}\mathbb{P}_N^n\left\{\lambda: \mathbb{P}_N^n(\lambda)>e^{-\frac{2\pi}{\sqrt{6}}\sqrt{n}}\right\}=1$$
or equivalently that
$$\lim_{n\rightarrow \infty}\mathbb{P}_N^n\left\{\lambda: -\frac 1{\sqrt{n}} \ln\frac{\dim E_\lambda}{N^n}<\frac{2\pi}{\sqrt{6}}\right\}=1.$$
From this it is immediate that 
$$-\frac{1}{\sqrt{n}} \ln \frac{\max\{\dim E_\lambda\}}{N^n}<\frac{2\pi}{\sqrt{6}}$$
for large enough $n$.

\textit{The lower bound.} 
By Propositions \ref{prop:measure} and \ref{prop:integral}, for any $\lambda\in\mathbb{Y}_N^n$ we have
\begin{equation}
\label{eq:lnPNnFinal}
-\frac{\ln \mathbb{P}_N^n(\lambda)}{\sqrt{n}} = \sqrt{n}\left(\frac 12 \|f\|_{\frac 12}^2+2\int_{|s-\frac c2|>1}H_c'(s)f(s)ds\right)+\hat{\theta}(\lambda)-\hat{\rho}(\lambda)-\varepsilon_n,
\end{equation}
where $f(s)=L_\lambda(s)-\Omega_c(s)$. Let $0<i\leq N$ be such that $\lambda_i>0$, where $\lambda_i$ is the length of the $i$-th row of the Young diagram $\lambda$. Let $h_\lambda$ denote the height of $\lambda$, i.e. the number of nonzero rows. Note that $h\leq N$.
\begin{figure}[ht]
\centering
\includegraphics[width=7cm]{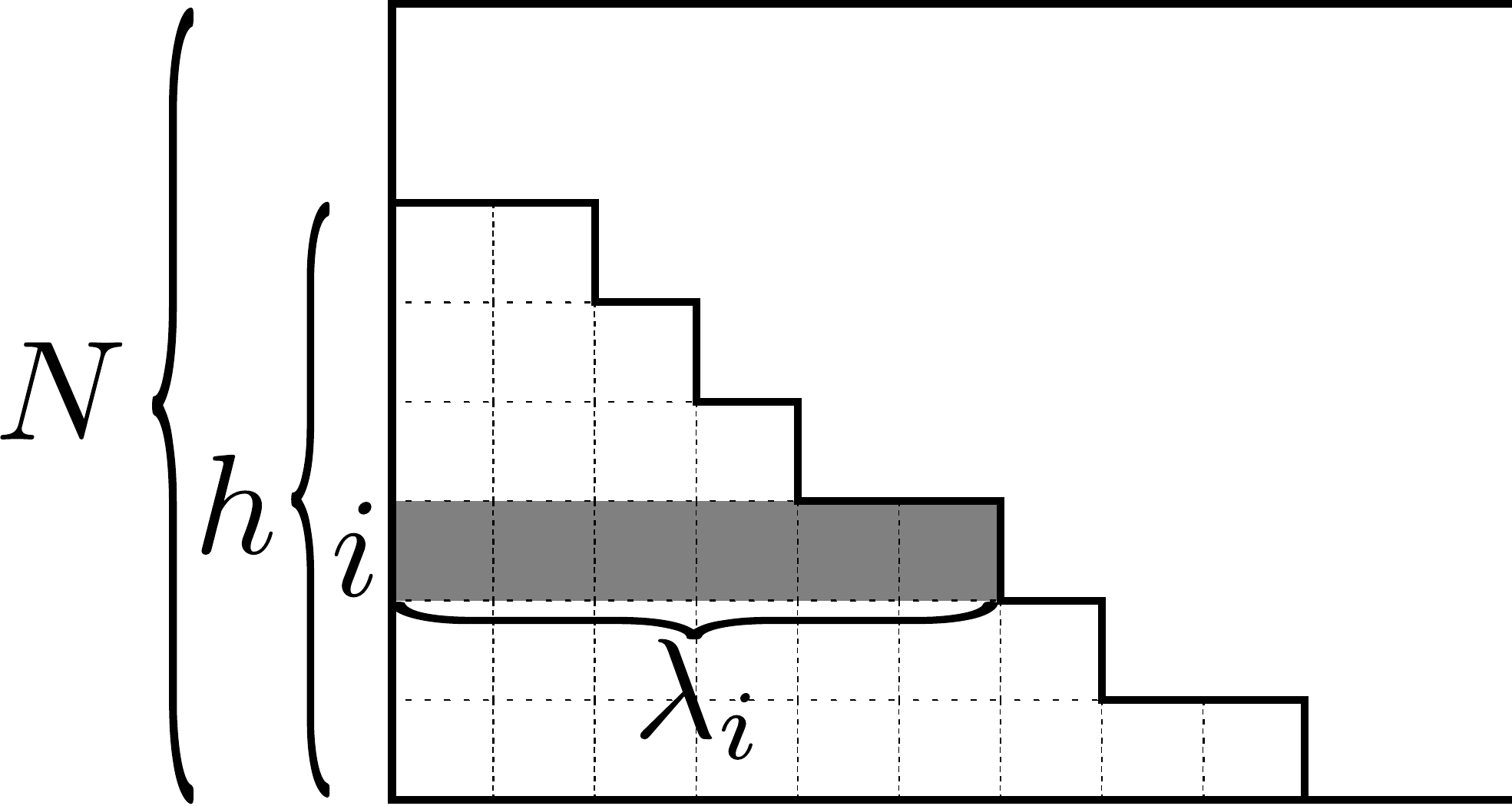}
\caption{\label{fig:HookContent}}
\end{figure}
It is easy to see that $N+c_{i,\lambda_i}\geq h_\lambda+c_{i,\lambda_i}=h_\lambda-i+\lambda_i = h_{i,1}$, i.e. that the shifted content of the last cell in a row of a Young diagram is at least as large as the hook length of the first cell of the row (see Figure \ref{fig:HookContent}). For a fixed $i$ the value of the shifted content $N+c_{i,j}$ decreases by $1$ from right to left, while the length of the hook $h_{i,j}$ decreases by at least one from left to right. Since $m(x)$ is a decreasing function, this implies that $\hat{\theta}(L_\lambda)\geq\hat{\rho}(L_\lambda)$.

It was proven in Corollary \ref{cor:UniqueMinimizer} that $\int_{|s-\frac c2|>1}H_c'(s)f(s)ds\geq 0$. We will not give a lower bound for $\|f\|_{\frac 12}^2$. For our purposes a very rough estimate suffices: we only consider the contribution of diagonal slices in the double integral $\|f\|_{\frac 12}^2$ and use that $L_\lambda$ is piecewise linear with slopes $\pm 1$.

Define $s_i:={i}/({2\sqrt{n}})$. Define
$s^*_i:=min_{s_i\leq s\leq s_{i+1}}(f'(s))^2$.
It follows that
\begin{align*}
\frac{\sqrt{n}}2\|f\|^2
&
\geq\frac{\sqrt{n}}2\sum_i\iint_{s_i\leq s,t\leq s_{i+1}}\(\frac{f(s)-f(t)}{s-t}\)^2 ds dt\geq \frac{\sqrt{n}}2\sum_i (f'(s^*_i))^2 (\Delta s_i)^2
\\&
=\frac{1}{4}\sum_i (f'(s^*_i))^2 \Delta s_i.
\end{align*}
Replacing the Riemann sum by the corresponding integral and using that the sum of all the other terms in \eqref{eq:lnPNnFinal} is nonnegative, we obtain that for any $\varepsilon>0$, there exists $n_0\in\mathbb{N}$ such that for all $n>n_0$ 
\begin{equation*}
-\frac{\ln \mathbb{P}_N^n(\lambda)}{\sqrt{n}}\geq \frac 14 \int_{-\infty}^{\infty}(L_\lambda'(z)-\tilde{\Omega}'_c(z))^2 dz-\varepsilon.
\end{equation*}
Since $L_\lambda$ is linear in the intervals $(s_i,s_{i+1})$ and $L_\lambda'(s)=\pm 1$ for $s\in(s_i,s_{i+1})$, we obtain
\begin{equation*}
-\frac{\ln \mathbb{P}_N^n(\lambda)}{\sqrt{n}}\geq \frac 14 \int_{-\infty}^{\infty}(\sign(\tilde{\Omega}'_c(z))-\tilde{\Omega}'_c(z))^2 dz-\varepsilon 
= \frac 14 \int_{-1}^1(\sign(z)-\tilde{\Omega}'_c(z))^2 dz-\varepsilon,
\end{equation*}
which implies the lower bounds in Theorems \ref{thm:max} and \ref{thm:meas} with 
$$\alpha_c=\frac 14 \int_{-1}^1(\sign(z)-\tilde{\Omega}'_c(z))^2 dz.$$

\bibliographystyle{alpha}
\bibliography{mybib}

\end{document}